\documentclass[onefignum,onetabnum]{siamonline190516}



\usepackage{lipsum}
\usepackage{amsfonts}
\usepackage{graphicx}
\usepackage{epstopdf}
\usepackage{algorithmic}
\ifpdf
  \DeclareGraphicsExtensions{.eps,.pdf,.png,.jpg}
\else
  \DeclareGraphicsExtensions{.eps}
\fi

\usepackage{enumitem}
\setlist[enumerate]{leftmargin=.5in}
\setlist[itemize]{leftmargin=.5in}


\newsiamremark{remark}{Remark}
\newsiamremark{problem}{Problem}
\newsiamremark{hypothesis}{Hypothesis}
\crefname{hypothesis}{Hypothesis}{Hypotheses}
\newsiamthm{claim}{Claim}

\headers{Dynamics of the Suarez-Schopf delayed oscillator}{M. M. Anikushin, A. O. Romanov}

\title{Hidden and unstable periodic orbits as a result of homoclinic bifurcations in the Suarez-Schopf delayed oscillator and the irregularity of ENSO\thanks{Submitted to the editors DATE.
\funding{The reported study was funded by the Russian Science Foundation (Project 22-11-00172).}}}

\author{Mikhail Anikushin\thanks{Department of
		Applied Cybernetics, Faculty of Mathematics and Mechanics,
		St. Petersburg University, 28 Universitetskiy prospekt, Peterhof, 198504, Russia
  (\email{demolishka@gmail.com}).}, 
  \and Andrey Romanov\thanks{Department of
  	Applied Cybernetics, Faculty of Mathematics and Mechanics,
  	St. Petersburg University, 28 Universitetskiy prospekt, Peterhof, 198504, Russia
  (\email{romanov.andrey.twai@gmail.com}).}
}

\usepackage{amsopn}


\ifpdf
\hypersetup{
  pdftitle={Hidden and unstable periodic orbits as a result of homoclinic bifurcations in the Suarez-Schopf delayed oscillator and the irregularity of ENSO},
  pdfauthor={M.M. Anikushin, A.O. Romanov}
}
\fi


\externaldocument{ex_supplement}


\begin{document}

\maketitle

\begin{abstract}
  We revisit the classical Suarez-Schopf delayed oscillator. Special attention is paid to the region of linear stability in the space of parameters. By means of the theory of inertial manifolds developed in our adjacent papers, we provide analytical-numerical evidence for the existence of two-dimensional inertial manifolds in the model. This allows to suggest a complete qualitative description of the dynamics in the region of linear stability. We show that there are two subregions corresponding to the existence of hidden or self-excited periodic orbits. These subregions must be separated by a curve on which homoclinic ``figure eights'', bifurcating into a single one or a pair of unstable periodic orbits, should exist. We relate the observed hidden oscillations and homoclinics to the irregularity theories of ENSO and provide numerical evidence that chaotic behavior may appear if a small periodic forcing is applied to the model. We also use parameters from the Suarez-Schopf model to discover hidden and self-excited asynchronous periodic regimes in a ring array of coupled lossless transmission lines studied by J.~Wu and H.~Xia.
\end{abstract}

\begin{keywords}
  delayed oscillator, periodically forced oscillator, ENSO, hidden oscillations, homoclinic orbits, Poincar\'{e}-Bendixson theory, inertial manifolds
\end{keywords}

\begin{AMS}
  34K11, 34K60, 34K18, 37C29
\end{AMS}

\section{Introduction}
In the present paper we revisit the classical delayed oscillator proposed by M.J.~Suarez and P.S.~Schopf in \cite{Suarez1988} as a model for El Ni\~{n}o--Southern Oscillation (ENSO). It is given by the scalar delay equation
\begin{equation}
	\label{EQ: ElNinoSSmodel}
	\dot{x}(t) = x(t) - \alpha x(t-\tau) - x^{3}(t),
\end{equation}
where $\alpha \in (0,1)$ and $\tau>0$ are dimensionless parameters. For these parameters $\phi^{0}(\cdot) \equiv 0$ is always a stationary state with a one-dimensional unstable manifold. Moreover, there also exists a pair of symmetric stationary states $\phi^{+}(\cdot) \equiv \sqrt{1-\alpha}$ and $\phi^{-}(\cdot) \equiv -\sqrt{1-\alpha}$. Below we pay special attention to the region in the space of parameters $(\tau,\alpha)$, where $\phi^{+}$ and $\phi^{-}$ are linearly stable. We call it the \textit{region of linear stability}.

Note that neither M.J.~Suarez and P.S.~Schopf \cite{Suarez1988} nor I.~Boutle, R.H.S.~Taylor and R.A.~R\"{o}mer \cite{Boutleet2007ElNino}, who conducted independent simulations of the model, found oscillations in the region of linear stability. Apparently, they were misled by the standard local linear analysis and the intuition that the model should describe the linear instability route to periodicity (see Chapter 6 in the monograph of M.J.~McPhaden, A.~Santoso and W.~Cai \cite{McPhadenBook2020} for a discussion). A more careful analysis shows that there may exist self-excited periodic orbits for parameters close to the \textit{neutral curve} (the term used in \cite{Suarez1988}), where the symmetric equilibria lose their stability and the subcritical Hopf bifurcation occurs. As we approach the region of true stability (with a gradient-like behavior), these self-excited oscillations become hidden (see Appendix \ref{APP: LocalizationHiddenAttrators} for a brief introduction to the theory of hidden attractors) and then disappear. There is also a variety of periods for these oscillations, which may agree with real ENSO events. These regions are schematically shown in Fig. \ref{FIG: SSHiddenCurves} (see Section \ref{SEC: SSmodelHidden} for details).
\begin{figure}
	\centering
	\includegraphics[width=1.\linewidth]{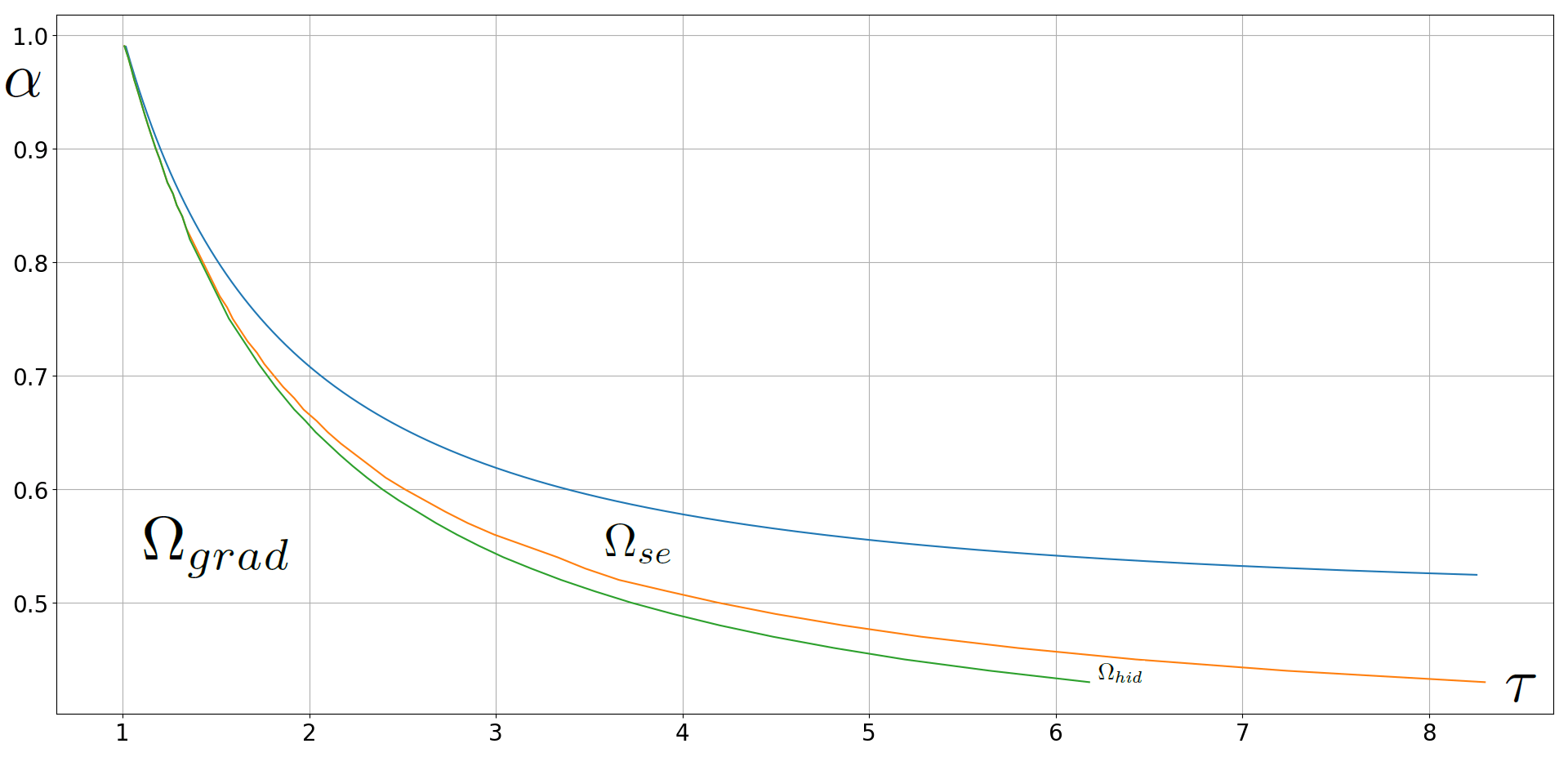}	
	\caption{The lower hidden curve (green), the upper hidden curve (orange) and the neutral curve (blue). In the region $\Omega_{se}$ between the orange and the blue curves, only phase portraits with self-excited periodic orbits corresponding to Fig. \ref{fig: SSPPunstableTwo} are observed. In the region $\Omega_{hid}$ between the green and the orange curves, only hidden periodic orbits with phase portraits corresponding to Fig. \ref{fig: SSPPunstable} are observed. The region $\Omega_{grad}$ under the green curve is expected to be nonoscillatory with a gradient-like behavior. At the points from the upper hidden curve phase portraits with homoclinic ``figure eights'' corresponding to Fig. \ref{fig: SSPPHomo} are expected.}
	\label{FIG: SSHiddenCurves}
\end{figure}

We are grateful to the anonymous referee for suggesting that the unique zero stationary point for $\alpha=\tau=1$ is a double-zero singularity with reflection symmetry, which unfolding is discussed in the monograph of Yu.A.~Kuznetsov \cite{KuznetsovElemtsAppliedBF1998} (see Fig. 9.10 therein). Thus, using the normal form theory (see, for example, the monograph of S.~Guo and J.~Wu \cite{GuoWu2013BifTh}) one can rigorously prove the existence of the curves from Fig. \ref{FIG: SSHiddenCurves} and the corresponding phase portraits in a neighborhood of the point $(\tau,\alpha)=(1,1)$.

Moreover, the same referee drew our attention to the recent paper of S.K.J.~Falkena et al. \cite{FalkenaQuinnMZform2019}, where using the MATLAB toolbox DDE-BIFTOOL a bifurcation diagram for \eqref{EQ: ElNinoSSmodel} was computed. In particular, Fig. 4 in \cite{FalkenaQuinnMZform2019} contains the upper hidden curve from our Fig. \ref{FIG: SSHiddenCurves}, but it did not contain the lower hidden curve and did not mention the presence of hidden oscillations in the model. Of course, this was beyond the interest of \cite{FalkenaQuinnMZform2019}, but it also shows that the standard software for numerical analysis does not distinguish between hidden and self-excited oscillations even in the simplest models.

In this paper, we focus on the region $\Omega_{hid}$, where hidden periodic orbits exist, and place emphasis on its role in understanding the ENSO phenomenon and other oscillators.

\begin{figure}
	\centering
	\includegraphics[width=1\linewidth]{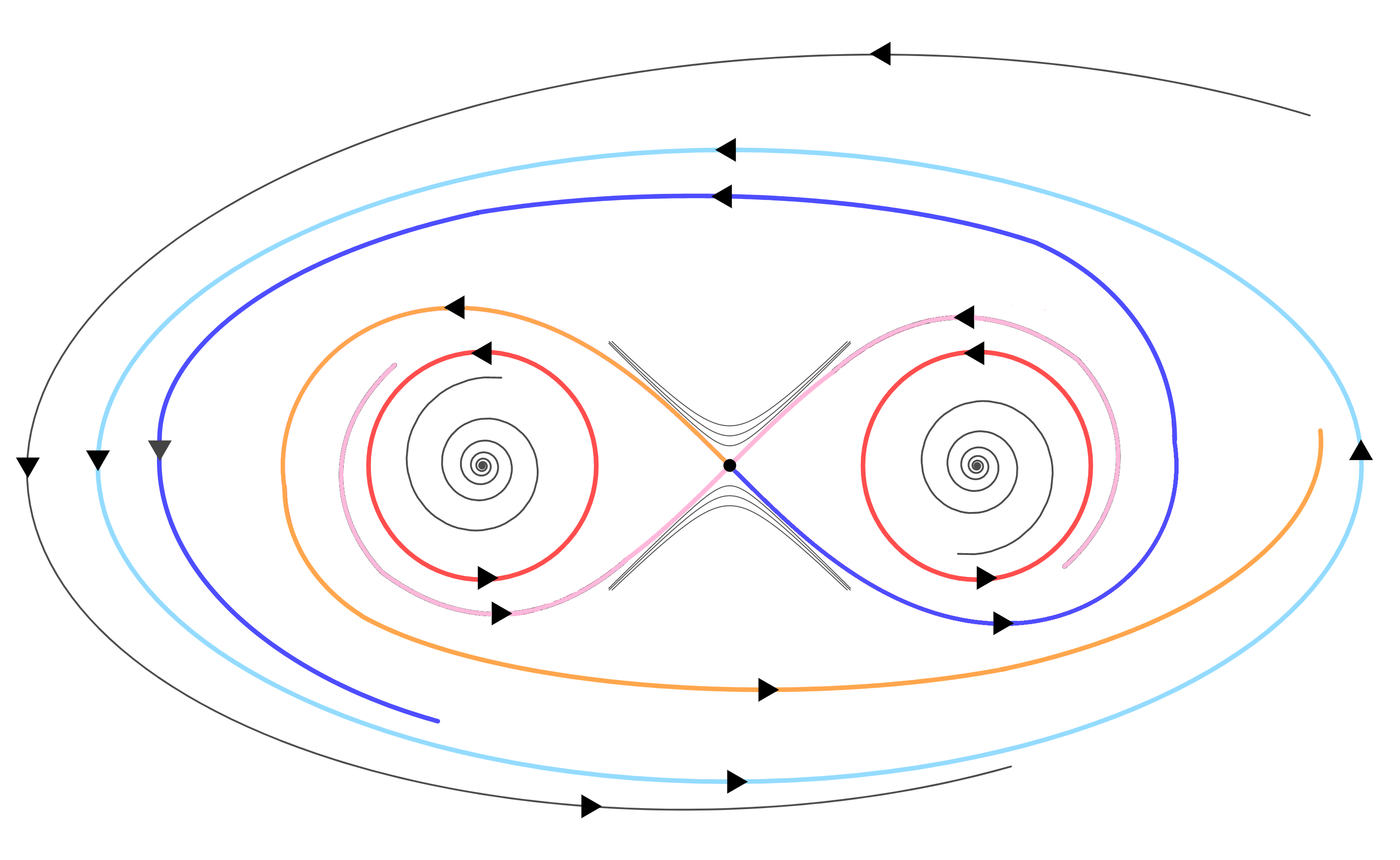}
	\caption{A self-excited attracting periodic orbit (cyan) coexists with two unstable periodic orbits (red) born after a homoclinic bifurcation. The unstable separatrices (blue and orange) tend to the attracting periodic orbit. The stable separatrices (pink) tend to the unstable periodic orbits in the negative direction of time.}
	\label{fig: SSPPunstableTwo}
\end{figure}
\begin{figure}
	\centering
	\includegraphics[width=1\linewidth]{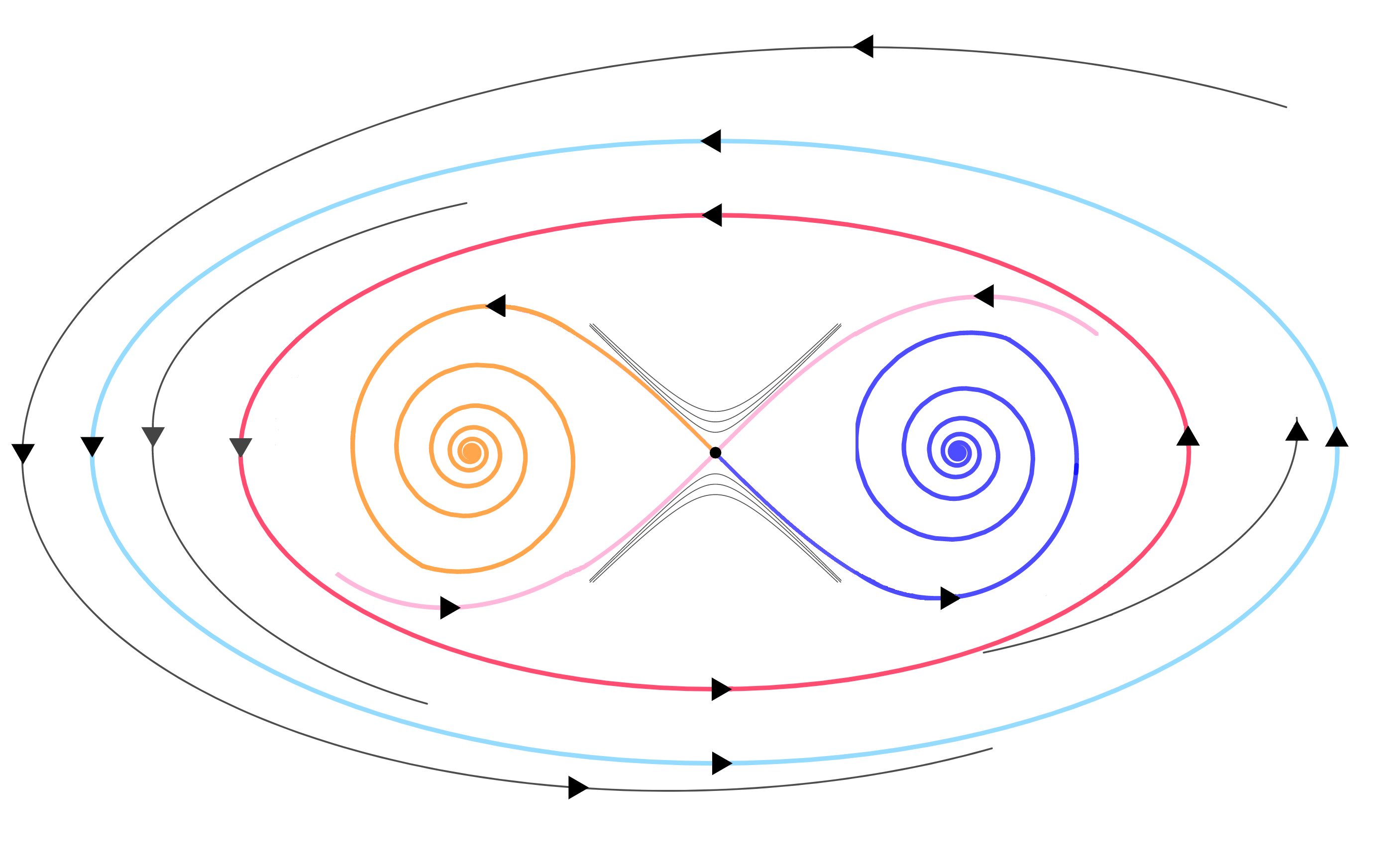}
	\caption{A hidden attracting periodic orbit (cyan) coexists with an unstable periodic orbit (red) born after a homoclinic bifurcation. The unstable separatrices (blue and orange) tend to the asymptotically stable equilibria. The stable separatrices (pink) tend to the single unstable periodic orbit in the negative direction of time.}
	\label{fig: SSPPunstable}
\end{figure}
\begin{figure}
	\centering
	\includegraphics[width=1\linewidth]{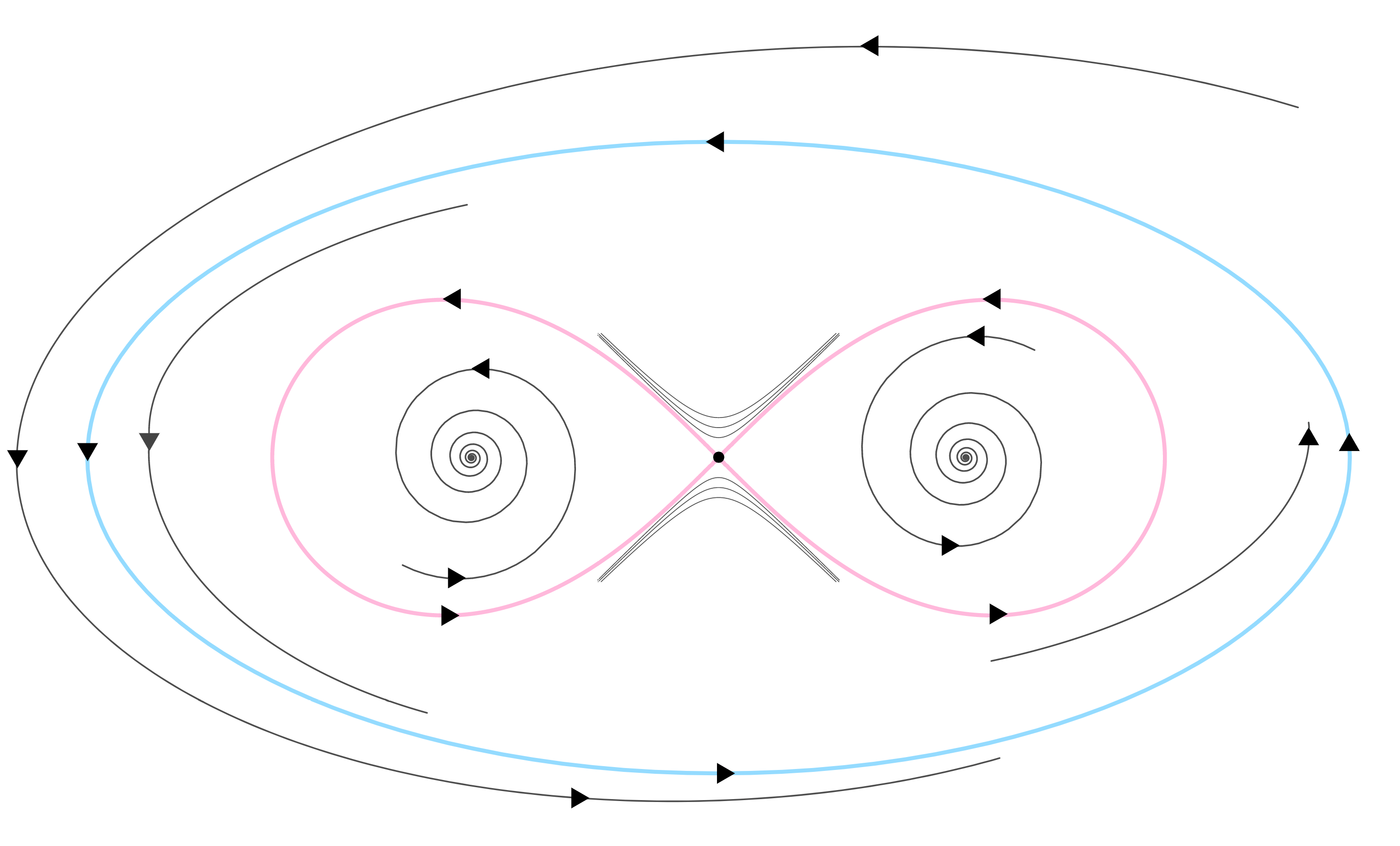}
	\caption{A self-excited attracting periodic orbit (cyan) coexists with a homoclinic ``figure eight'' (pink), which bifurcates into a single unstable periodic orbit (Fig. \ref{fig: SSPPunstable}) or a pair of unstable periodic orbits (Fig. \ref{fig: SSPPunstableTwo}).}
	\label{fig: SSPPHomo}
\end{figure}

It turns out that it is possible to suggest a complete qualitative description of the dynamics in the region of linear stability. Namely, using numerical estimates to bound the global attractor of \eqref{EQ: ElNinoSSmodel}, we provide analytical-numerical evidence for the existence of two-dimensional inertial manifolds with the aid of our developments on the theory \cite{Anikushin2020Geom,Anikushin2020FreqDelay,Anikushin2020Semigroups} (see Section \ref{SEC: IMSSjustification}). From this we can use planar arguments to conclude that the discovered self-excited oscillations must coexist with a pair of unstable periodic orbits (see Fig. \ref{fig: SSPPunstableTwo} and the region $\Omega_{se}$ in Fig. \ref{FIG: SSHiddenCurves}) and hidden oscillations must coexist with a single unstable periodic orbit (see Fig. \ref{fig: SSPPunstable} and the region $\Omega_{hid}$ in Fig. \ref{FIG: SSHiddenCurves}). Moreover, between these phase portraits there must exist a homoclinic ``figure eight'' (see Fig. \ref{fig: SSPPHomo}), which bifurcates into the single unstable orbit or the pair of unstable orbits. These parameters correspond to the upper hidden curve (orange) in Fig. \ref{FIG: SSHiddenCurves}. Moreover, on the lower hidden curve a saddle-node bifurcation of the hidden and unstable periodic orbits occurs, which leads to a gradient-like behavior in the region $\Omega_{grad}$ from Fig. \ref{FIG: SSHiddenCurves}. We conjecture that this scenario completely describes the qualitative dynamics in the region of linear stability (see Remark \ref{REM: SSLinearStabilityDescription}).

Note that the discovered phase portraits indicate the irregularity of ENSO (see R.~Kleeman \cite{Kleeman2008}, the monograph of M.J.~McPhaden, A.~Santoso and W.~Cai \cite{McPhadenBook2020} for discussions) more naturally than the dynamics corresponding to the region of linear instability (above the neutral curve), where all typical transient processes lead only to an attracting periodic orbit, for the following two reasons.

On the one hand, the observed multistability in the region $\Omega_{hid}$ of hidden oscillations described in Fig. \ref{fig: SSPPunstable} may correspond to the stochastic irregularity of ENSO \cite{Kleeman2008}. In this theory, the irregularity is believed to be caused by external forces (noise), which act on smaller time and space scales. Such noise may force the current system state to different basins of attraction. Note that Fig. \ref{fig: SSPPunstable} describes dynamics only schematically. In the Suarez-Schopf model the single unstable periodic orbit at some of its parts may be located close to zero (that agrees with the homoclinic bifurcation scenario) as well as to the hidden periodic orbit (especially near the warm and cold phases) or the unstable separatrices. This only promotes the role of noise (see Fig. \ref{FIG: SShidden1}).

On the other hand, it is well-known that a homoclinic ``figure eight'' (as in Fig. \ref{fig: SSPPHomo}) may lead to reach (chaotic) dynamics under the influence of a small periodic forcing (see, for example, the papers of S.V.~Gonchenko, C.~Sim\'{o} and A.~Vieiro \cite{Gonchenkoetal2013} or A.~Litvak-Hinenzon and V.~Rom-Kedar \cite{LitvakKedar1997}). This goes in the direction of the phenomena observed in a bit more complex delayed model studied by E.~Tziperman et al. \cite{Tzipermanetal1994}, who laid the foundation for the irregularity theory based on low-dimensional chaos caused by a small amplitude periodic forcing. In Section \ref{SEC: SSPeriodicForceChaos} we provide numerical evidence that a small periodic forcing can cause a similar chaotic behavior in the Suarez-Schopf model.

Thus, our investigation of the Suarez-Schopf model shows that the simple linear delayed interaction between the Rossby and Kelvin waves, which is described by the model, may serve as a basis for both theories of irregularity. Discovered patterns and relative simplicity of the model open a perspective of studying via more delicate analytical and numerical techniques. It also shows that not only engineering systems, but also models from climate dynamics must be studied more carefully when it comes to numerical experiments.

This paper is organized as follows. In Section \ref{SEC: IMSSjustification} we provide analytical-numerical evidence for the existence of two-dimensional inertial manifolds in the model. In Section \ref{SEC: SSmodelHidden} we state basic rigorous facts about the dynamics of the Suarez-Schopf model, show by means of concrete parameters from the region of linear stability that the presence of self-excited periodic orbits, hidden orbits and homoclinics is possible and propose a description of the dynamics in the region of linear stability. In Section \ref{SEC: SSPeriodicForceChaos} we demonstrate that a small periodic forcing can cause chaotic behavior in the model. In Section \ref{SEC: NonOscillatoryConj} we propose an analytically described nonoscillatory region motivated by dimension estimates. In Section \ref{SEC: Conclusions} we collect some conclusions of our investigations. In Appendix \ref{APP: LocalizationHiddenAttrators} we briefly discuss the theory of hidden attractors and their localization via the linear feedback gain method by means of the Suarez-Schopf model. In Appendix \ref{SEC: AsyncOscillLLTL} we use certain parameters from the Suarez-Schopf model to discover asynchronous oscillations, which can be hidden or self-excited, in a ring array of coupled lossless transmission lines.
\section{Analytical-numerical justification of the existence of two-dimensional inertial manifolds}
\label{SEC: IMSSjustification}

In this section, we provide analytical-numerical evidence for the existence of $C^{1}$-differentiable uniformly normally hyperbolic inertial manifolds in \eqref{EQ: ElNinoSSmodel}, which are given by a graph over the two-dimensional spectral subspace corresponding to the linearization at $\phi^{0}$. For purposes of Section \ref{SEC: SSPeriodicForceChaos} we will study a periodically forced model, although, our theory \cite{Anikushin2020Geom} is not limited to such models.

Let us firstly describe a general scheme from \cite{Anikushin2020FreqDelay,Anikushin2020Geom} by means of the following class of delay equations in $\mathbb{R}^{n}$ given by
\begin{equation}
	\label{EQ: ExampleDelayEqClass}
	\dot{x}(t) = \widetilde{A}x_{t} + \widetilde{B}F(t,Cx_{t})+W(t),
\end{equation}
where $x_{t}(\theta)=x(t+\theta)$ for $\theta \in [-\tau,0]$ is the history segment; $\widetilde{A} \colon C([-\tau;0];\mathbb{R}^{n}) \to \mathbb{R}^{n}$, $\widetilde{B} \colon \mathbb{R}^{m} \to \mathbb{R}^{n}$ and $C \colon C([-\tau,0];\mathbb{R}^{n}) \to \mathbb{R}^{r}$ are bounded linear operators; $W \colon \mathbb{R} \to \mathbb{R}^{n}$ is a $\sigma$-periodic continuous function and $F \colon \mathbb{R} \times \mathbb{R}^{r} \to \mathbb{R}^{m}$ is a $\sigma$-periodic in $t$ continuous map satisfying for some $\Lambda>0$ the Lipschitz inequality
\begin{equation}
	\label{EQ: SSExampleLipsschitzCond}
	|F(t,y_{1})-F(t,y_{2})|_{\Xi} \leq \Lambda |y_{1}-y_{2}|_{\mathbb{M}} \text{ for any } y_{1},y_{2} \in \mathbb{M}, t \in \mathbb{R}.
\end{equation}
Here $\Xi = \mathbb{R}^{m}$ and $\mathbb{M} = \mathbb{R}^{r}$ are endowed with some (not necessarily Euclidean) inner products and the corresponding norms are denoted by $|\cdot|_{\Xi}$ and $|\cdot|_{\mathbb{M}}$ respectively.

Put $\mathcal{Q} := \mathbb{R}/\sigma\mathbb{Z}$ and consider the shift dynamical system $\vartheta$ on $\mathcal{Q}$, i.~e. $\vartheta^{t}(q) := q + t$ for any $t \in \mathbb{R}$ and $q \in \mathcal{Q}$. For $t \geq 0$, $q \in \mathcal{Q}$ and $\phi_{0} \in \mathbb{E} := C([-\tau,0];\mathbb{R}^{n})$ we define the cocycle map as $\psi^{t}(q,\phi_{0}) := x_{t+t_{0}}$, where $t_{0} = q + k\sigma$ for some $k \in \mathbb{Z}$ and $x(t)=x(t;t_{0},\phi_{0})$ is the classical solution (see, for example, J.K.~Hale \cite{Hale1977}) to \eqref{EQ: ExampleDelayEqClass} defined for $t \geq t_{0}-\tau$ and such that $x_{t_{0}} = \phi_{0}$. Then it is clear that the cocycle property $\psi^{t+s}(q,\phi_{0}) = \psi^{t}( \vartheta^{s}(q), \psi^{s}(q,\phi_{0}))$ is satisfied for all $t,s \geq 0$, $q \in \mathcal{Q}$ and $\phi_{0} \in \mathbb{E}$.

By the Riesz representation theorem, there exist matrix-valued functions of bounded variation $a(\theta)$ and $c(\theta)$ such that
\begin{equation}
	\label{EQ: LinearOperatorsACRepresentation}
	\widetilde{A}\phi = \int_{-\tau}^{0} da(\theta)\phi(\theta) \text{ and } C\phi = \int_{-\tau}^{0} d c(\theta) \phi(\theta) \text{ for all } \phi \in C([-\tau,0];\mathbb{R}^{n}).
\end{equation}
Put $\alpha(p) :=  \int_{-\tau}^{0} e^{p \theta} d a(\theta)$, $\gamma(p) := \int_{-\tau}^{0} e^{p \theta} d c(\theta)$ and consider the \textit{transfer matrix} $W(p)=\gamma(p)(\alpha(p)-pI)^{-1}\widetilde{B}$. Note that $\operatorname{det}(\alpha(p) - p I)=0$ is the characteristic equation for $\dot{x}(t) = \widetilde{A}x_{t}$ (see \cite{Hale1977}) and its roots are the possible poles of $W(p)$. Since we are going to consider complex values of $p$, it is essential to note that $W(p)$ represents a linear operator between the complexifications $\Xi^{\mathbb{C}}$ and $\mathbb{M}^\mathbb{C}$ of the spaces $\Xi$ and $\mathbb{M}$ respectively.

Let $\mathcal{Q}(y,\xi)$ be a quadratic form of $y \in \mathbb{M}$ and $\xi \in \Xi$. By $\mathcal{Q}^{\mathbb{C}}$ we denote its Hermitian extension to $\mathbb{M}^{\mathbb{C}}$ and $\Xi^{\mathbb{C}}$, i.~e. $\mathcal{Q}^{\mathbb{C}}(y_{1}+iy_{2},\xi_{1} + i \xi_{2}) := \mathcal{Q}(y_{1},\xi_{1}) + \mathcal{Q}(y_{2},\xi_{2})$ for any $y_{1},y_{2} \in \mathbb{M}$ and $\xi_{1},\xi_{2} \in \Xi$. We suppose that $\mathcal{Q}(y,0) \geq 0$ for all $y \in \mathbb{M}$ and $\mathcal{Q}(C\phi_{1}-C\phi_{2},F(t,C\phi_{1}) - F(t,C\phi_{2})) \geq 0$ for any $t \in \mathbb{R}$ and $\phi_{1},\phi_{2} \in \mathbb{E}$.

We have the following theorem, which follows from our results in \cite{Anikushin2020Geom, Anikushin2020FreqDelay}.
\begin{theorem}
	\label{TH: ExampleIMdelay}
	Suppose that for some $\nu_{0}>0$ the characteristic equation $\operatorname{det}(\alpha(p) - p I) = 0$ has exactly $j$ roots with $\operatorname{Re}p > -\nu_{0}$ and has no roots with $\operatorname{Re}p = -\nu_{0}$. For $\mathcal{G}$ as above, let the frequency inequality
	\begin{equation}
		\label{EQ: FrequencyInequalityGeneral}
		\sup_{\xi \in \Xi^{\mathbb{C}}}\frac{\mathcal{G}^{\mathbb{C}}(-W(-\nu + i \omega)\xi,\xi)}{|\xi|^{2}_{\Xi^{\mathbb{C}}} } < 0 \text{ for all } \omega \in \mathbb{R}
	\end{equation}
    be satisfied. Suppose \eqref{EQ: ExampleDelayEqClass} has at least one bounded in the future solution. Then we have
    \begin{enumerate}
    	\item[1).] There exists a family of $j$-dimensional submanifolds $\mathfrak{A}(q)$, $q \in \mathcal{Q}$, which are mapped bi-Lipschitz homeomorphically by the spectral projector $\Pi$ onto the generalized eigenspace $\mathbb{E}^{u}(\nu_{0})$ corresponding to the roots with $\operatorname{Re}p > -\nu_{0}$. 
    	\item[2).] The family $\mathfrak{A}(q)$ is invariant, i.~e. $\psi^{t}(q,\mathfrak{A}(q)) =\mathfrak{A}(\vartheta^{t}(q))$, and $\psi^{t}(q,\cdot) \colon \mathfrak{A}(q) \to \mathfrak{A}(\vartheta^{t}(q))$ is a bi-Lipschitz homeomorphism for any $t \geq 0$ and $q \in \mathcal{Q}$.
    	\item[3).] If the derivative $F'_{y}$ of $F$ in $y$ exists and continuous, then $\mathfrak{A}(q)$ is a $C^{1}$-differentiable submanifold and the maps from items 1) and 2) are $C^{1}$-diffeomorphisms.
    	\item[4).] There exists a constant $M>0$ such that for any $q \in \mathcal{Q}$ and $\phi_{0} \in \mathbb{E}$ there exists a unique point $\phi^{*}_{0} \in \mathfrak{A}(q)$ such that
    	\begin{equation}
    		\| \psi^{t}(q,\phi_{0}) - \psi^{t}(q,\phi^{*}_{0}) \|_{\mathbb{E}} \leq M e^{-\nu_{0} t} \operatorname{dist}(\phi_{0},\mathfrak{A}(q)).
    	\end{equation}  
    \end{enumerate}
\end{theorem}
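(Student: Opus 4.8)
The plan is to derive the theorem from the abstract inertial manifold theory of \cite{Anikushin2020Geom} together with the frequency theorem for delay equations of \cite{Anikushin2020FreqDelay}; the role of \eqref{EQ: FrequencyInequalityGeneral} is to manufacture a quadratic Lyapunov functional whose index equals the number $j$ of unstable modes. Throughout I would work with the natural choice of quadratic form $\mathcal{Q}(y,\xi) = \Lambda^{2}|y|^{2}_{\mathbb{M}} - |\xi|^{2}_{\Xi}$ (this is the $\mathcal{G}$ appearing in \eqref{EQ: FrequencyInequalityGeneral}, taken at $\nu = \nu_{0}$): for it the hypothesis $\mathcal{Q}(y,0) \geq 0$ is immediate, the sector condition $\mathcal{Q}(C\phi_{1}-C\phi_{2}, F(t,C\phi_{1})-F(t,C\phi_{2})) \geq 0$ is exactly the Lipschitz bound \eqref{EQ: SSExampleLipsschitzCond}, and \eqref{EQ: FrequencyInequalityGeneral} reduces to the small-gain condition $\Lambda\,\|W(-\nu_{0}+i\omega)\| < 1$ for all $\omega \in \mathbb{R}$, the norm being taken with respect to the prescribed inner products on $\Xi$ and $\mathbb{M}$.

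First I would recast \eqref{EQ: ExampleDelayEqClass} as a cocycle in the Hilbert space $\mathbb{H} := \mathbb{R}^{n} \times L^{2}(-\tau,0;\mathbb{R}^{n})$, into which $\mathbb{E}$ embeds compactly for times exceeding $\tau$, so that the data $\widetilde{A}, \widetilde{B}, C$ define an $L^{2}$-well-posed input-output system with input $\xi(t) = F(t,Cx_{t})$, output $Cx_{t}$ and transfer operator $W(p)$, as required by \cite{Anikushin2020FreqDelay}. The spectral hypothesis---exactly $j$ roots of $\operatorname{det}(\alpha(p)-pI)=0$ to the right of the line $\operatorname{Re}p = -\nu_{0}$ and none on it---provides the exponential dichotomy of the generator shifted by $\nu_{0}$, and \eqref{EQ: FrequencyInequalityGeneral} is its frequency-domain companion. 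Applying the frequency theorem then yields a bounded self-adjoint operator $P$ on $\mathbb{H}$ for which the quadratic form $V(\phi) := \langle P\phi,\phi\rangle$ has two decisive properties: (i) along any solution $v$ of the linear inhomogeneous equation driven by an input $\xi$, the dissipation inequality $\frac{d}{dt}V(v_{t}) + 2\nu_{0}V(v_{t}) \leq -\mathcal{Q}(Cv_{t},\xi(t))$ holds; and (ii) $V$ is negative definite on the $j$-dimensional spectral subspace $\mathbb{E}^{u}(\nu_{0}) = \Pi\mathbb{H}$ and positive definite on its complement, i.e. $V$ is a quadratic cone of index $j$.

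With $P$ in hand the geometry follows the standard squeezing route. For two solutions $v_{1},v_{2}$ of \eqref{EQ: ExampleDelayEqClass} put $w = v_{1}-v_{2}$ and $\zeta(t) = F(t,Cv_{1})-F(t,Cv_{2})$; then $w$ solves the linear inhomogeneous equation with input $\zeta$, the sector condition gives $\mathcal{Q}(Cw_{t},\zeta(t)) \geq 0$, and (i) yields $\frac{d}{dt}\bigl[e^{2\nu_{0}t}V(w_{t})\bigr] \leq 0$. Hence $e^{2\nu_{0}t}V\bigl(\psi^{t}(q,\phi_{1})-\psi^{t}(q,\phi_{2})\bigr)$ is nonincreasing for every pair of initial data, which is precisely the global monotone functional required by \cite{Anikushin2020Geom}. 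Feeding this index-$j$ monotone functional---together with the assumed existence of a solution bounded in the future, which makes each fiber $\mathfrak{A}(q)$ nonempty---into the abstract theorem of \cite{Anikushin2020Geom} produces the family $\mathfrak{A}(q)$: the cone structure of $V$ forces $\Pi$ to restrict to a bi-Lipschitz homeomorphism onto $\mathbb{E}^{u}(\nu_{0})$ (item 1); the description of $\mathfrak{A}(q)$ as the set of amenable points of \cite{Anikushin2020Geom} gives invariance (item 2); the monotonicity of $e^{2\nu_{0}t}V$, with $V$ equivalent to the squared norm on the stable cone, gives the exponential tracking estimate at rate $e^{-\nu_{0}t}$ (item 4); and the $C^{1}$ version of the underlying contraction, available once $F'_{y}$ is continuous, upgrades the graph and the fiber maps to $C^{1}$ objects (item 3), while the gap at $\operatorname{Re}p = -\nu_{0}$ delivers uniform normal hyperbolicity.

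The hard part will be the verification underlying the second paragraph rather than the geometric construction: arranging the Hilbert triple so that $W(p)$ is genuinely the transfer operator of an $L^{2}$-well-posed system with admissible observation $C$, carrying the non-Euclidean inner products on $\Xi$ and $\mathbb{M}$ and their Hermitian complexifications consistently through the frequency theorem, and---most delicately---certifying that the operator $P$ it produces has exactly $j$ negative directions, coinciding with $\mathbb{E}^{u}(\nu_{0})$. This alignment of the index of $P$ with the spectral count is what makes $V$ a cone of the correct dimension; once it is secured, the squeezing property and the manifold then follow routinely from \cite{Anikushin2020Geom, Anikushin2020FreqDelay}.
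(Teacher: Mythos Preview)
Your proposal is correct and follows exactly the route the paper indicates: the theorem is stated as a direct consequence of the abstract inertial manifold theory of \cite{Anikushin2020Geom} combined with the frequency theorem for delay equations of \cite{Anikushin2020FreqDelay}, and you have spelled out precisely how these pieces fit together (Hilbert-space realization, frequency theorem producing the operator $P$, squeezing via the monotone functional $e^{2\nu_{0}t}V$, and the amenable-points construction).

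One minor point: you restrict from the outset to the specific form $\mathcal{Q}(y,\xi) = \Lambda^{2}|y|^{2}_{\mathbb{M}} - |\xi|^{2}_{\Xi}$, whereas the theorem is stated for a general $\mathcal{Q}$ satisfying $\mathcal{Q}(y,0)\geq 0$ and the sector condition on $F$. Nothing in your argument actually uses the special structure of this form---the frequency theorem of \cite{Anikushin2020FreqDelay} and the squeezing step both go through for any such $\mathcal{Q}$---so simply drop that restriction and your outline proves the theorem as stated.
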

The family $\mathfrak{A}(q)$ is called an \textit{inertial manifold} for the cocycle $\psi$.

Note also that the projector $\Pi$ from Theorem \ref{TH: ExampleIMdelay} is stable w.~r.~t. perturbations. This is important for numerical simulations since the computation of such projectors requires to approximate integrals and eigenvalues as in formula \eqref{EQ: ProjectorFormulaSSmodel} below. Moreover, the inertial manifold itself is robust w.~r.~t. perturbations of the equation (see \cite{Anikushin2020Geom}).
\begin{remark}
	\label{REM: NormalHyperbolicity}
	 It can be shown that the inertial manifold is uniformly normally hyperbolic \cite{Anikushin2020Geom}. In the autonomous case, under the conditions of Theorem \ref{TH: ExampleIMdelay} this implies that the tangent space at any equilibrium is given by the generalized eigenspace corresponding to the characteristic roots with $\operatorname{Re}p > -\nu_{0}$. This circumstance is essential for the conclusions made at the end of Section \ref{SEC: SSmodelHidden}.
\end{remark}
\begin{remark}
	There is a natural choice of the quadratic form $\mathcal{Q}$ for general nonlinearities satisfying the Lipschitz condition \eqref{EQ: SSExampleLipsschitzCond}. Namely, $\mathcal{Q}(y,\xi) := \Lambda^{2}|y|^{2}_{\mathbb{M}}-|\xi|^{2}_{\Xi}$. Then the frequency inequality \eqref{EQ: FrequencyInequalityGeneral} reads as
	\begin{equation}
		\label{EQ: FrequencyInequalitySmith}
		|W(-\nu + i \omega)|_{\Xi^{\mathbb{C}} \to \mathbb{M}^{\mathbb{C}}} < \Lambda^{-1} \text{ for all } \omega \in \mathbb{R}.
	\end{equation}
	In the case of delay equations, \eqref{EQ: FrequencyInequalitySmith} with $\Xi$ and $\mathbb{M}$ being endowed with Euclidean inner products was used by R.A.~Smith \cite{Smith1992} to develop the Poincar\'{e}-Bendixson theory for such equations. An abstract operator form of \eqref{EQ: FrequencyInequalitySmith} (where $W(p)=C(A-pI)^{-1}B$ for certain operators $A,B,C$) can be used to study other classes of equations \cite{Anikushin2020Geom,Anikushin2021AAdyn}. For example, it can be applied to neutral delay equations \cite{Anikushin2020FreqDelay}. Moreover, in the case of semilinear parabolic problems with a self-adjoint linear part, it gives rise to the well-known Spectral Gap Condition in its optimal form (see \cite{Anikushin2020Geom,Anikushin2022FreqParab} for more discussions).
\end{remark}
\begin{remark}
	There is some flexibility in applications of Theorem \ref{TH: ExampleIMdelay} to concrete systems. Firstly, there may be many ways to write down a given system in the form \eqref{EQ: ExampleDelayEqClass} to obtain desired spectral properties in the linear part (see \cite{Smith1992} for a nice example). Moreover, if the nonlinearity $F$ is not Lipschitz (for example, as in \eqref{EQ: ElNinoSSmodel}), one applies a truncation procedure, redefining $F$ outside a dissipativity region $\Omega$, which contains the global attractor, such that the Lipschitz constant of $F$ on $\Omega$ (more precisely, on $C\Omega$) is preserved globally after the truncation. Since we wish to make the Lipschitz constant $\Lambda$ as small as possible, it is essential to provide sharp estimates for the region. As we will see, this is the main obstacle for rigorous proofs of the existence of inertial manifolds in the Suarez-Schopf model by our method\footnote{Here we mean proofs for the parameters from $\Omega_{se}$ and $\Omega_{hid}$. For certain parameters from $\Omega_{grad}$ the existence of two-dimensional and one-dimensional inertial manifolds can be proved rigorously \cite{Anikushin2020Semigroups}.}. Numerical experiments show that the estimate $\sqrt{1+\alpha}$ given in Section \ref{SEC: SSmodelHidden} is a rough bound for the global attractor. We expect that more delicate dissipativity regions can be constructed for \eqref{EQ: ElNinoSSmodel}.
\end{remark}

Let us consider a periodically perturbed Suarez-Schopf model
\begin{equation}
	\label{EQ: SSmodelForcedAbstract}
	\dot{x}(t) = x(t) - \alpha x(t-\tau) - x^{3}(t) + W(t).
\end{equation}
It can be shown that \eqref{EQ: SSmodelForcedAbstract} is dissipative (see \cite{Anikushin2020Semigroups}) and, consequently, its solutions are defined in the future and generate a cocycle for which there exists a global attractor $\mathcal{A}$ given by an invariant family of compact subsets $\mathcal{A}(q) \subset C([-\tau,0];\mathbb{R})$ for $q \in \mathcal{Q}$ (see \cite{Hale1977}).

For a fixed $R > 0$, let $\Lambda(R)>0$ be the Lipschitz constant of $y^{3}$ on $[-R,R]$ and let $g_{R}(y)$ be a $C^{1}$-differentiable function such that $g_{R}(y) = y^{3}$ for $y \in [-R,R]$ and $0 \leq g'_{R}(y) \leq \Lambda(R)$ for all $y \in \mathbb{R}$. We will apply Theorem \ref{TH: ExampleIMdelay} to the model
\begin{equation}
	\label{EQ: TruncatedSSGeneralPert}
	\dot{x}(t) = x(t) - \alpha x(t-\tau) - g_{R}(x(t)) + W(t).
\end{equation}
Clearly, \eqref{EQ: SSmodelForcedAbstract} coincides with \eqref{EQ: TruncatedSSGeneralPert} in the ball of radius $R$. 

Let us consider \eqref{EQ: TruncatedSSGeneralPert} in terms of \eqref{EQ: ExampleDelayEqClass} with $\widetilde{A}\phi = \phi(0) - \alpha \phi(-\tau)$, $\widetilde{B} = -1$, $C\phi = \phi(0)$ and $F(y) = g_{R}(y)$. For the quadratic form $\mathcal{Q}(y,\xi) = \xi(\Lambda y - \xi)$, the frequency inequality \eqref{EQ: FrequencyInequalityGeneral} reads as
\begin{equation}
	\label{EQ: SSTruncatedFreqCond}
	\operatorname{Re}W(-\nu_{0}+i\omega) + \Lambda^{-1}(r) > 0 \text{ for all } \omega \in \mathbb{R},
\end{equation}
where $W(p) = -( 1-\alpha e^{-\tau p} - p )^{-1}$. 

Note that the characteristic equation for the linear part of \eqref{EQ: TruncatedSSGeneralPert} is given by
\begin{equation}
	\label{EQ: SSZeroLinearization}
	1 - \alpha e^{-\tau p} - p = 0.
\end{equation}
It is not hard to prove that for $\alpha \in (0,1)$ and $\tau>0$ it has two leading real simple roots $\lambda_{1} > 0$ and $\lambda_{2} < 0$. We are interested in $\nu_{0}>0$ which separates $\lambda_{1}$ and $\lambda_{2}$ from the other eigenvalues in the sense of Theorem \ref{TH: ExampleIMdelay}. Numerical experiments suggest that for many interesting parameters the fibers $\mathcal{A}(q)$ lie in the ball of radius $1$. It can be verified, at least numerically, that for $R=1$, any $\alpha \in (0.5,1)$ and $\tau \in [1,1.98]$ the frequency inequality \eqref{EQ: SSTruncatedFreqCond} can be satisfied for a proper (in the given sense) $\nu_{0}$ (see Section 6 in \cite{Anikushin2020Semigroups}). Thus, for these parameters, if the fibers $\mathcal{A}(q)$ of the attractor $\mathcal{A}$ lie in the ball of radius $1$, then every $\mathcal{A}(q)$ is contained in the fiber $\mathfrak{A}(q)$ of a two-dimensional inertial manifold for \eqref{EQ: TruncatedSSGeneralPert}.

Now let us consider the spectral projector $\Pi$ onto the space $\mathbb{E}^{u}(\nu_{0})$ generated by the first two eigenfunctions $e^{\lambda_{1} \theta}$ and $e^{\lambda_{2}\theta}$. Standard calculations show that $\Pi$ is given by
\begin{equation}
	\label{EQ: ProjectorFormulaSSmodel}
	(\Pi\phi)(\theta) = c_{1} \cdot e^{\lambda_{1}\theta} + c_{2} \cdot e^{\lambda_{2}\theta}, \text{ where } c_{i} = \frac{ \phi(0) + \int_{-\tau}^{0}e^{-\lambda_{i}(\tau+\theta)} \phi(\theta)d\theta }{\alpha\tau e^{-\tau\lambda_{i}} - 1}, \ i \in{1,2}.
\end{equation}
It can be verified that since $\alpha \in (0,1)$, the denominator from the definition of $c_{i}$ is always non-zero, i.~e. $\lambda_{1}$ and $\lambda_{2}$ are simple. Below, we consider $\Pi$ as a map onto $\mathbb{R}^{2}$ given by $\Pi\phi := (c_{1},c_{2})$. Theorem \ref{TH: ExampleIMdelay} guarantees that its restriction to the inertial manifold fiber $\mathfrak{A}(q)$ is a $C^{1}$-diffeomorphism. Moreover, both the inertial manifold and the map $\Pi$ persist under small perturbations.
\section{Oscillations in the region of linear stability}
\label{SEC: SSmodelHidden}
We start by stating some basic results concerned with dynamics of \eqref{EQ: ElNinoSSmodel}.

Let us consider the value $\gamma:=\sqrt{1+\alpha}$ and define the set $\mathcal{S}_{R}:=\{ \phi \in C([-\tau,0];\mathbb{R}) \ | \ \|\phi\|_{\infty} \leq \gamma + R \}$ for any $R \geq 0$. Here $\|\cdot\|_{\infty}$ denotes the supremum norm in the space of continuous functions $C([-\tau,0];\mathbb{R})$. At this point we can state some general dynamical properties of \eqref{EQ: ElNinoSSmodel} as follows.
\begin{enumerate}
	\item Equation \eqref{EQ: ElNinoSSmodel} generates a dissipative semiflow $\varphi^{t} \colon \mathbb{E} \to \mathbb{E}$, where $t \geq 0$, in the space $\mathbb{E}=C([-\tau,0];\mathbb{R})$. Moreover, the sets $\mathcal{S}_{R}$ are positively invariant, i.~e. $\varphi^{t}(\mathcal{S}_{R}) \subset \mathcal{S}_{R}$ for all $R \geq 0$, and the $\omega$-limit set of any point $\phi_{0} \in \mathbb{E}$ lies in $\mathcal{S}_{0}$.
	\item The $\omega$-limit set of any point $\phi_{0} \in \mathbb{E}$ satisfies the Poincar\'{e}-Bendixson trichotomy, i.~e. it can be either a stationary point, either a periodic orbit or a union of a set of stationary points and complete orbits connecting them.
\end{enumerate}
For Item 1 see, for example, our paper \cite{Anikushin2020Semigroups}. Item 2 follows from the result of J.~Mallet-Paret and G.R.~Sell \cite{MalletParetSell1996} since \eqref{EQ: ElNinoSSmodel} is a monotone cyclic feedback system. From Item 1 we have that there exists a global attractor, i.~e. an invariant set $\mathcal{A}$ which attracts bounded subsets of the phase space (see, for example, Propositions 1.6 and 1.7 in the monograph of N.V.~Kuznetsov and V.~Reitmann \cite{KuzReit2020}).

Now we turn to the local linear analysis. Recall that there are three stationary states: the origin $\phi^{0}(\cdot) \equiv 0$ and the pair of symmetric equilibria $\phi^{+}(\cdot) \equiv \sqrt{1-\alpha}$ and $\phi^{-}(\cdot) \equiv -\sqrt{1-\alpha}$. Eigenvalues of the linearization at $\phi^{0}$ are given by the roots $p \in \mathbb{C}$ of the equation  \eqref{EQ: SSZeroLinearization}. As we have already noted, \eqref{EQ: SSZeroLinearization} has exactly one positive root $\lambda_{1} > 0$, one negative root $\lambda_{2} < 0$ and the other roots are located to the left of the line $\lambda_{2} + i \mathbb{R}$. So there is always a one-dimensional unstable manifold at $\phi^{0}$.

Linearization at the symmetric equilibria leads to the characteristic equation
\begin{equation}
	\label{EQ: SSSymmetricLinearization}
	3\alpha - 2 - \alpha e^{-\tau p} - p = 0.
\end{equation}
Searching for purely imaginary roots $p = i \zeta$, we get that such roots appear for $\alpha>0.5$ and
\begin{equation}
	\zeta = \pm \sqrt{\alpha^{2} - (3\alpha - 2)^{2}} \text{ and } \tau = \frac{\pm\arccos\frac{3\alpha - 2}{\alpha} + 2\pi k}{\sqrt{\alpha^{2} - (3\alpha - 2)^{2}}},
\end{equation}
where $k=0,1,2,\ldots$. In particular, when $\alpha > 0.5$ is fixed, the first (as $\tau$ increases) pair of purely imaginary roots appear at
\begin{equation}
	\label{EQ: SSStabilityCurve}
	\tau = \frac{\arccos\frac{3\alpha - 2}{\alpha}}{\sqrt{\alpha^{2} - (3\alpha - 2)^{2}}}.
\end{equation}
Let $\Omega_{st}$ denote the set of pairs $(\tau,\alpha)$, where $\alpha \in (0,1)$ and $\tau>0$ such that the point $(\tau,\alpha)$ lies below the curve \eqref{EQ: SSStabilityCurve} which was named \textit{neutral curve} in \cite{Suarez1988}. This curve is the blue curve in Fig. \ref{FIG: SSHiddenCurves}. It can be verified that for any parameters from $\Omega_{st}$ equation \eqref{EQ: SSSymmetricLinearization} has only roots with negative real parts and, consequently, the symmetric stationary states $\phi^{+}$ and $\phi^{-}$ are asymptotically stable. We call $\Omega_{st}$ the \textit{region of linear stability}.
\begin{remark}
	For numerical integration of \eqref{EQ: ElNinoSSmodel}, we used the JiTCDDE package for Python (see G.~Ansmann \cite{AnsmannJITCODE2018}). For example, to obtain Fig. \ref{FIG: SelfExcitedSSmodel} and Fig. \ref{FIG: SShidden1} the system was integrated on the time interval $[0,1000]$ with integration parameters taken as $\operatorname{first\_step}=\operatorname{max\_step} = 10^{-3}$, $\operatorname{atol} = 10^{-5}$ and $\operatorname{rtol} = 10^{-5}$.
\end{remark}

Numerical simulations described in \cite{Suarez1988} (as well as in \cite{Boutleet2007ElNino}) did not detect periodic orbits for parameters from $\Omega_{st}$. In fact, a more careful study allows to detect self-excited periodic orbits for parameters $(\tau,\alpha) \in \Omega_{st}$ close to the neutral curve \eqref{EQ: SSStabilityCurve}. For example, when $\alpha = 0.75$, the point $(\tau,\alpha)$ belongs to $\Omega_{st}$ for any $\tau < 1.74$. Taking $\tau = 1.65$ and starting from a small neighborhood of $\phi^{0}$, one can observe a symmetric periodic orbit shown in Fig. \ref{FIG: SelfExcitedSSmodel}. Its period is $\sigma \approx 12.3$ ($\approx 8.17$ years, see Remark \ref{REM: SSmodelPeriodFormula}). Moreover, for $\alpha=0.75$ and $\tau = 1.58$ one may observe a hidden periodic orbit (Fig. \ref{FIG: SShidden1}) with period $\sigma \approx 15.3$ ($\sigma_{years} \approx 10.6$). It can be localized by a trajectory starting outside a small neighborhood of $\phi^{0}$. For example, the orange and blue curves in Fig. \ref{FIG: SShidden1} are obtained taking the initial data $\phi_{0}$ as the linear function with $\phi_{0}(0)=\pm 0.036$ and $\phi_{0}(-\tau)=\mp 0.036$ respectively.
\begin{remark}
	\label{REM: SSmodelPeriodFormula}
	To obtain the period in days, one should use the formula $\sigma_{days} = (\sigma \cdot \Delta) / \tau$, where $\Delta$ is the total delay (in days) depending on the Rossby and Kelvin waves propagation. This formula follows from the scaling used in \cite{Suarez1988} to obtain the dimensionless form \eqref{EQ: ElNinoSSmodel}. Usually, one takes $\Delta = 400$ \cite{Suarez1988} or $\Delta = 359$ \cite{Boutleet2007ElNino}. In the present section we use the value $\Delta=400$. Note that there exist hidden periodic orbits with a variety of periods such that any choice of $\Delta$ within reasonable limits does not change the qualitative conclusions on the behavior of periods.
\end{remark}
\begin{figure}
	\centering
	\includegraphics[width=1.\linewidth]{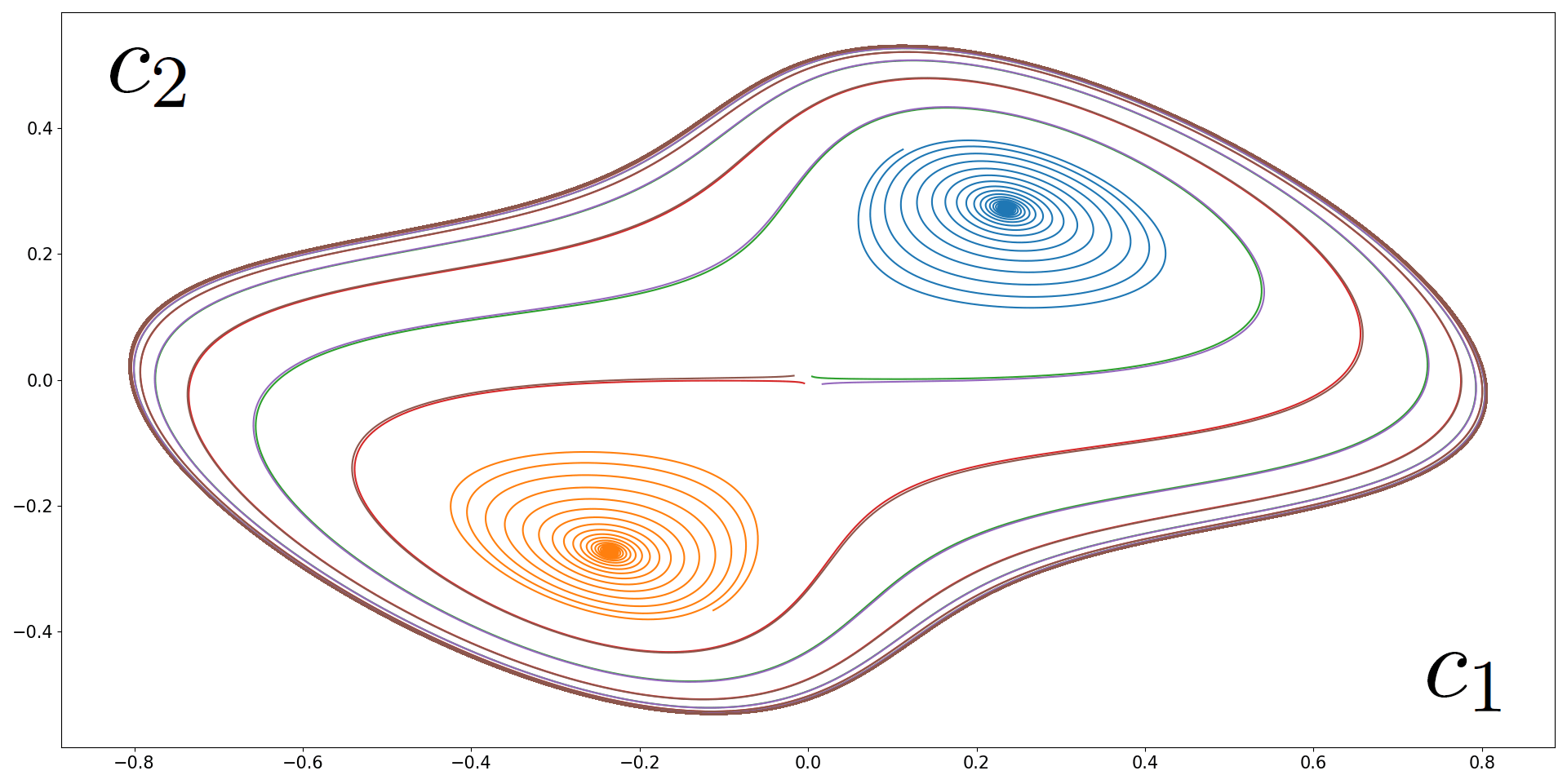}	
	\caption{A self-excited symmetric periodic orbit with period $\sigma \approx 12.3$ ($\sigma_{years} \approx 8.16$) of \eqref{EQ: ElNinoSSmodel}, where $\alpha=0.75$ and $\tau=1.65$, which can be localized from a small neighborhood of $\phi^{0}$. The blue and orange trajectories are attracted by the symmetric equilibria. All the trajectories are projected onto the $(c_{1},c_{2})$-plane by the projector $\Pi$.}
	\label{FIG: SelfExcitedSSmodel}
\end{figure}
\begin{figure}
	\centering
	\includegraphics[width=1.\linewidth]{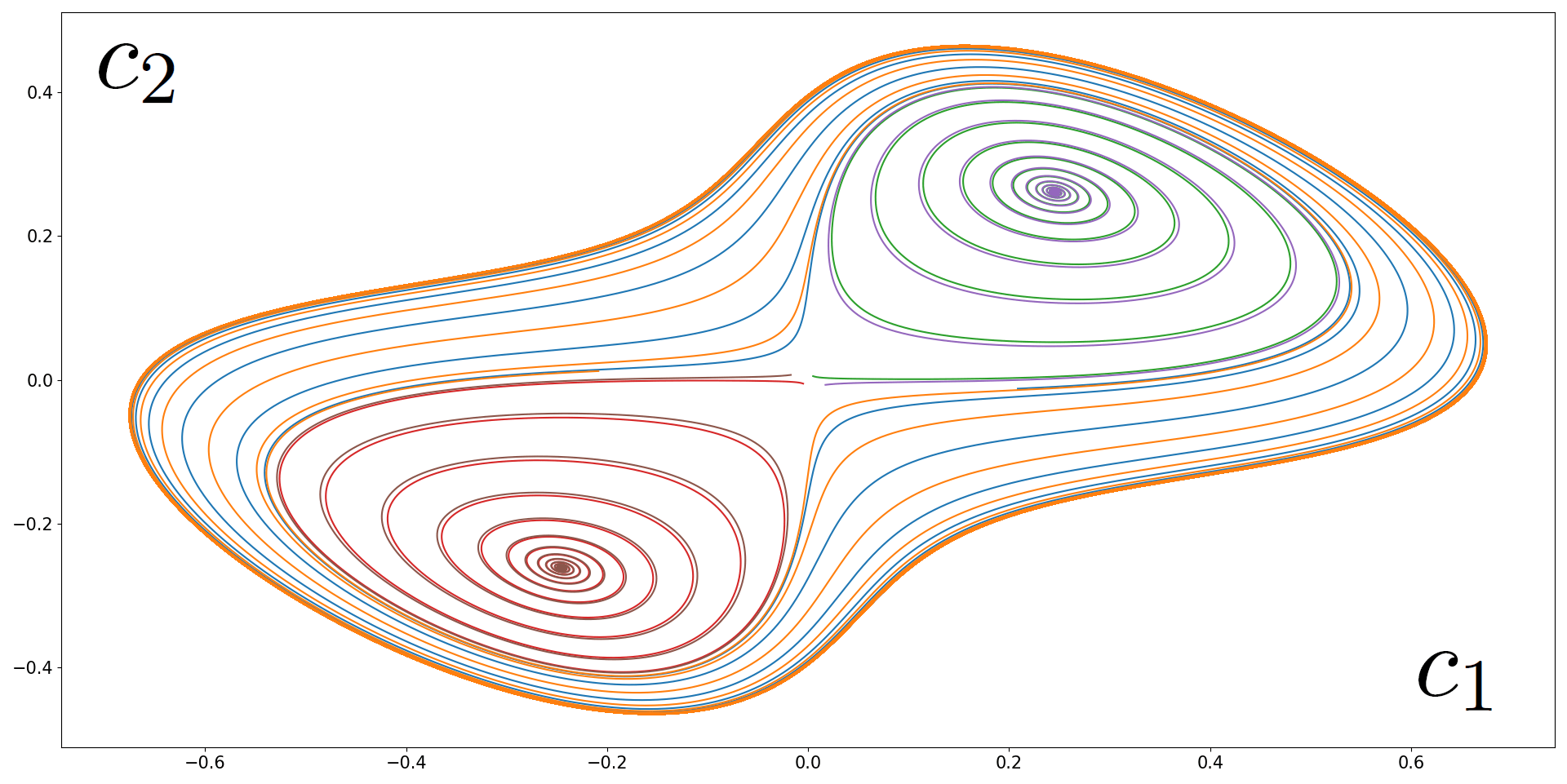}	
	\caption{A hidden symmetric periodic orbit with period $\sigma \approx 15.3$ ($\sigma_{years} \approx 10.6$) of system \eqref{EQ: ElNinoSSmodel}, where $\alpha=0.75$ and $\tau=1.58$, localized by the blue and orange trajectories. The other trajectories, starting sufficiently close to $\phi^{0}$, are attracted by the symmetric equilibria. All the trajectories are projected onto the $(c_{1},c_{2})$-plane by the projector $\Pi$.}
	\label{FIG: SShidden1}
\end{figure}

From numerical experiments it can be observed that for each pair of parameters in $\Omega_{st}$ there exists at most one attracting symmetric periodic orbit, which ``encloses'' the global attractor. Moreover, this orbit can be localized by a trajectory starting outside of the dissipativity region $\mathcal{S}_{0}$. This circumstance (along with the symmetricity) is justified by the existence of two-dimensional inertial manifolds in the model. Moreover, the parameters corresponding to self-excited periodic orbits are located closer to the neutral curve. They become hidden and then disappear as we decrease $\tau$ or $\alpha$.

Thus, to obtain the green and orange curves from Fig. \ref{FIG: SSHiddenCurves}, we estimated for any $\alpha \in [0.43,1)$ the minimal $\tau_{min}$ and the maximal $\tau_{max}$ values of the parameter $\tau$ for which hidden periodic orbits can be observed. Then the point $(\alpha,\tau_{min})$ corresponds to the lower hidden curve and $(\tau,\tau_{max})$ corresponds to the upper hidden curve. Some of approximations to these points are collected in Table \ref{TAB: TableHiddenPeriods}.
\begin{table}
	\begin{minipage}{.5\linewidth}
		\centering
		\begin{tabular}{||c c c c c||} 
			\hline
			$\alpha$ & $\tau_{min}$ & $\tau_{max}$ & $\sigma$ & $\sigma_{years}$ \\ [0.5ex] 
			\hline\hline
			0.52 & 3.35 & 3.66 & 18.41 & 5.37 \\
			\hline
			0.51 & 3.53 & 3.92 & 18.41 & 5.14 \\
			\hline
			0.50 & 3.73 & 4.19 & 19.21 & 5.13 \\ 
			\hline
			0.49 & 3.95 & 4.50 & 19.81 & 4.8 \\
			\hline
			0.48 & 4.20 & 4.86 & 20.79 & 4.68 \\
			\hline
		\end{tabular}
	\end{minipage}%
	\begin{minipage}{.5\linewidth}
		\centering
		\begin{tabular}{||c c c c c||} 
			\hline
			$\alpha$ & $\tau_{min}$ & $\tau_{max}$ & $\sigma$ & $\sigma_{years}$ \\ [0.5ex] 
			\hline\hline
			0.47 & 4.49 & 5.28 & 22.03 & 4.57 \\
			\hline
			0.46 & 4.82 & 5.80 & 22.74 & 4.3 \\
			\hline
			0.45 & 5.19 & 6.43 & 24.75 & 4.22 \\
			\hline
			0.44 & 5.65 & 7.24 & 26.70 & 4.04 \\
			\hline
			0.43 & 6.18 & 8.3 & 29.11 & 3.85 \\
			\hline
		\end{tabular}
	\end{minipage} 
	\caption{A table of points $(\alpha,\tau_{min})$ and $(\alpha,\tau_{max})$ approximating the lower and upper hidden curves respectively. For each point $(\alpha,\tau_{max})$ estimations of the dimensionless period $\sigma$ and the period in years $\sigma_{years}$ (see Remark \ref{REM: SSmodelPeriodFormula}) of the corresponding hidden periodic orbit are presented.}
	\label{TAB: TableHiddenPeriods}
\end{table}

Note that the presence of two-dimensional inertial manifolds (homeomorphic to the plane) motivates the existence of a homoclinic ``figure eight'' corresponding to the parameters $\alpha=0.75$ and some $\tau \in (1.58,1.65)$. It is expected that there is only one such value of $\tau$ and it corresponds to the parameters from the upper hidden curve. Clearly such a value cannot be founded numerically, but near it one may expect (due to computation errors) a behavior similar to that caused by a homoclinic orbit. Namely, trajectories with initial data near $\phi^{0}$ may tend to the symmetric equilibria as well as to the attracting periodic orbit. We found such a behavior near $\tau=1.596$. We will return to these parameters in Section \ref{SEC: SSPeriodicForceChaos}.

Moreover, using planar arguments in the spirit of the Poincar\'{e}-Bendixson theory, one can justify that the stable separatrices of $\phi^{0}$ on the inertial manifold\footnote{For their existence it is essential that the manifold is at least $C^{1}$-differentiable and normally hyperbolic, so the eigenvalues of the linearization at $\phi^{0}$ on the manifold coincide with $\lambda_{1}$ and $\lambda_{2}$ from \eqref{EQ: SSZeroLinearization}. See Remark \ref{REM: NormalHyperbolicity}.} in the case described by Fig. \ref{FIG: SelfExcitedSSmodel}, must tend in the negative direction of time to a pair of unstable periodic orbits enclosing $\phi^{+}$ and $\phi^{-}$ respectively. Thus, this case corresponds to the phase portrait described in Fig. \ref{fig: SSPPunstableTwo}. Analogously, in the case described by Fig. \ref{FIG: SShidden1}, the stable separatrices must tend in the negative direction of time to a single unstable periodic orbit, which separates $\phi^{0},\phi^{+}$ and $\phi^{-}$ from the hidden periodic orbit. This case corresponds to the phase portrait described in Fig. \ref{fig: SSPPunstable}.

\begin{remark}
	\label{REM: SSLinearStabilityDescription}
	We conjecture the following description of the dynamics in the region of linear stability. Fig. \ref{fig: SSPPunstable} describes a typical behavior in the region $\Omega_{hid}$ and Fig. \ref{fig: SSPPunstableTwo} describes a typical behavior in the region $\Omega_{se}$. Parameters from the upper hidden curve correspond to the existence of a homoclinic ``figure eight'' and the phase portraits described in Fig. \ref{fig: SSPPHomo}. This homoclinic bifurcates (note that $\lambda_{1} + \lambda_{2} > 0$, i. e. the saddle quantity is always positive; see Section \ref{SEC: NonOscillatoryConj}) into a single unstable periodic orbit (when moving towards the region $\Omega_{hid}$) or a pair of unstable periodic orbits (when moving towards the region $\Omega_{se}$). On the lower hidden curve a saddle-node bifurcation of the hidden and unstable periodic orbits occurs which lead to a gradient-like behavior in the region $\Omega_{grad}$. On the neutral curve a subcritical Andronov-Hopf bifurcation occurs.
\end{remark}
\section{Chaos caused by a periodic forcing}
\label{SEC: SSPeriodicForceChaos}
Let us consider \eqref{EQ: ElNinoSSmodel} under the influence of a periodic forcing as in \eqref{EQ: SSmodelForcedAbstract} with $W(t) = A\sin(t)$, where $A$ is a parameter.
\begin{remark}
\label{REM: IntegrationSSperturbedChaos}
For numerical integration of \eqref{EQ: SSmodelForcedAbstract}, here we use the JiTCDDE package for Python (see G.~Ansmann \cite{AnsmannJITCODE2018}). For the integration parameters we put $\operatorname{first\_step}=\operatorname{max\_step} = 10^{-4}$, $\operatorname{atol} = 10^{-6}$ and $\operatorname{rtol} = 10^{-6}$.
\end{remark}
\begin{figure}
	\begin{minipage}{.5\textwidth}
		\includegraphics[width=18pc,angle=0]{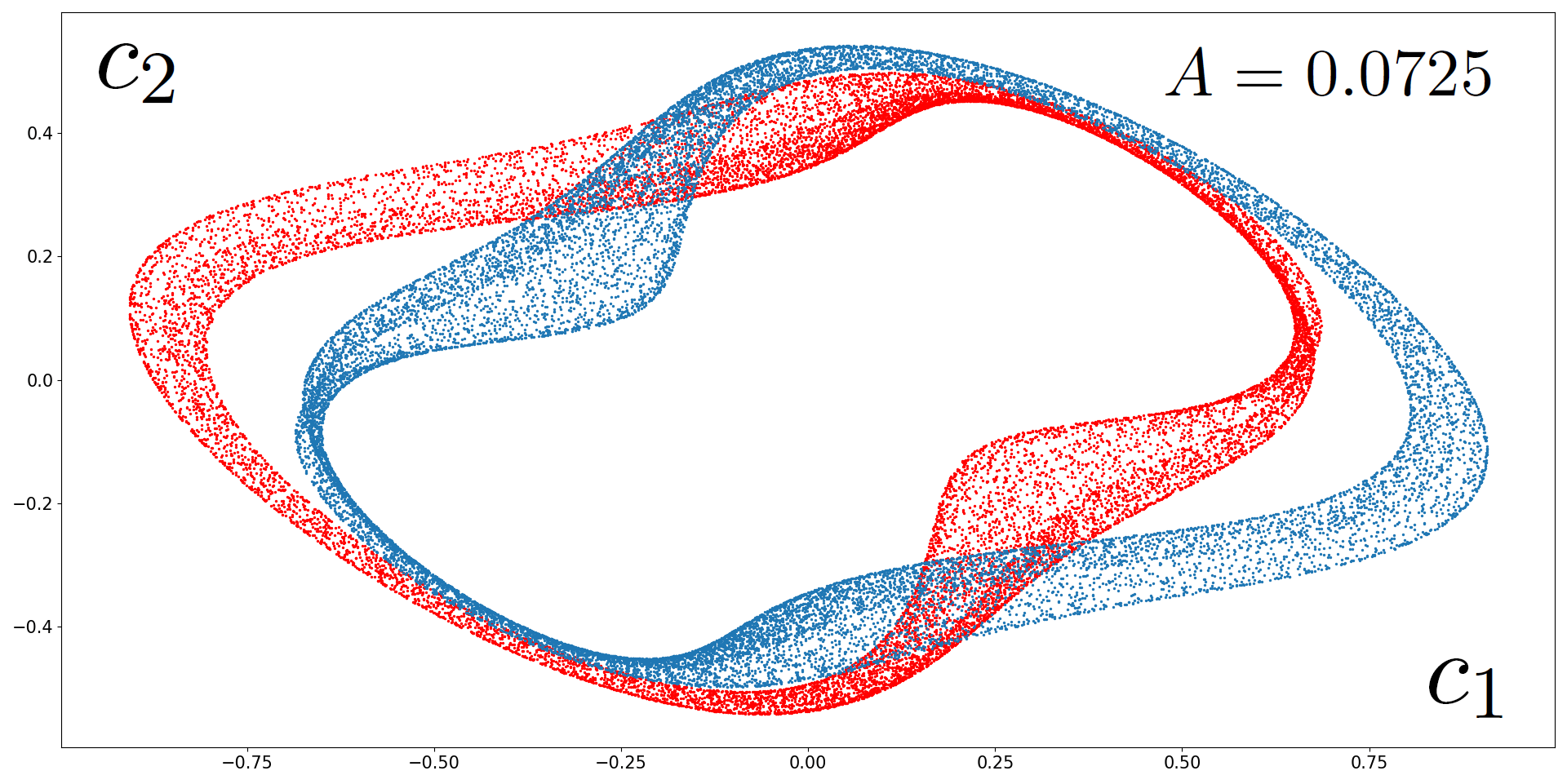}
		\includegraphics[width=18pc,angle=0]{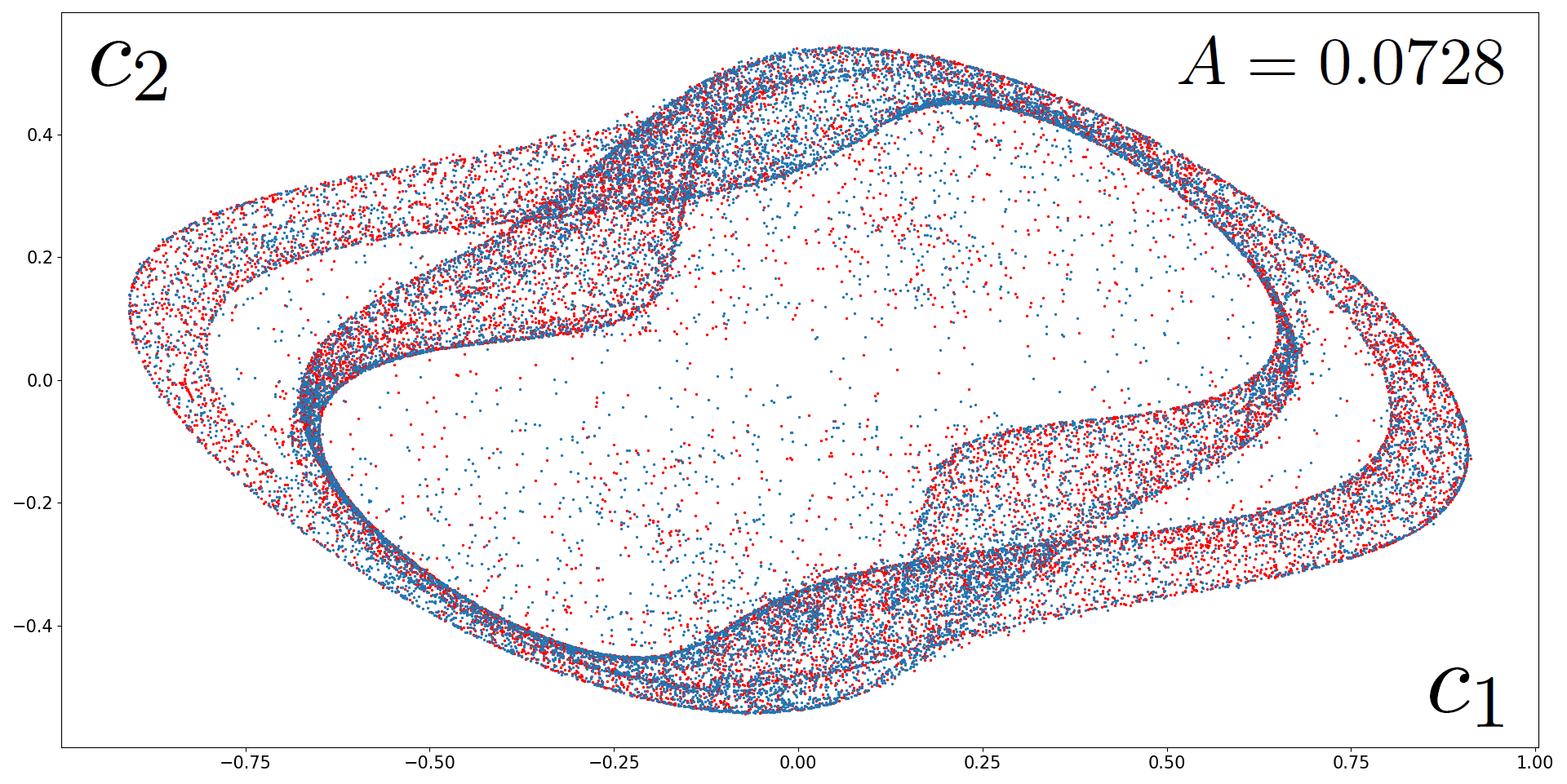}
		\includegraphics[width=18pc,angle=0]{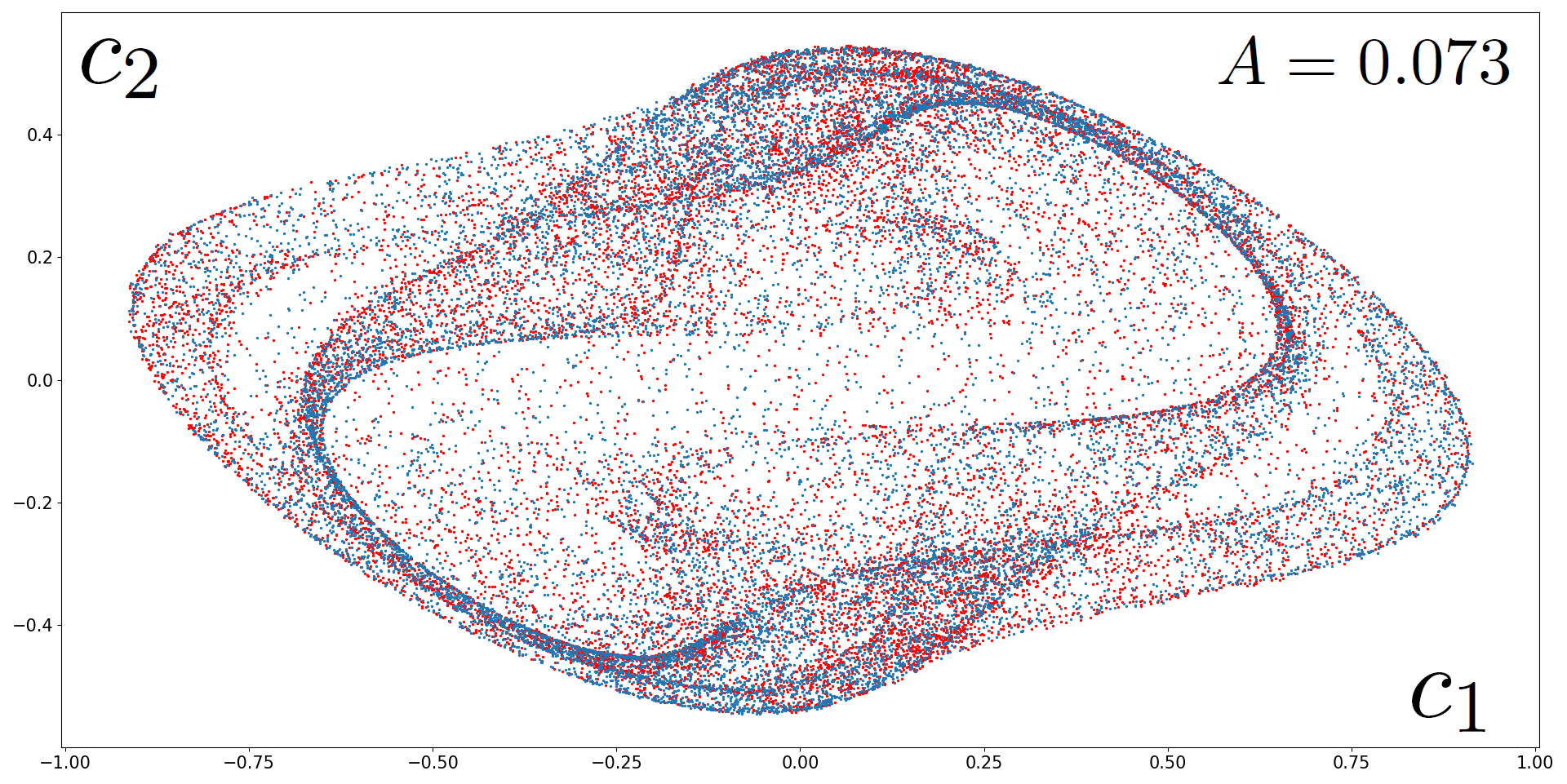}
	\end{minipage}%
	\begin{minipage}{.5\textwidth}
		\includegraphics[width=18pc,angle=0]{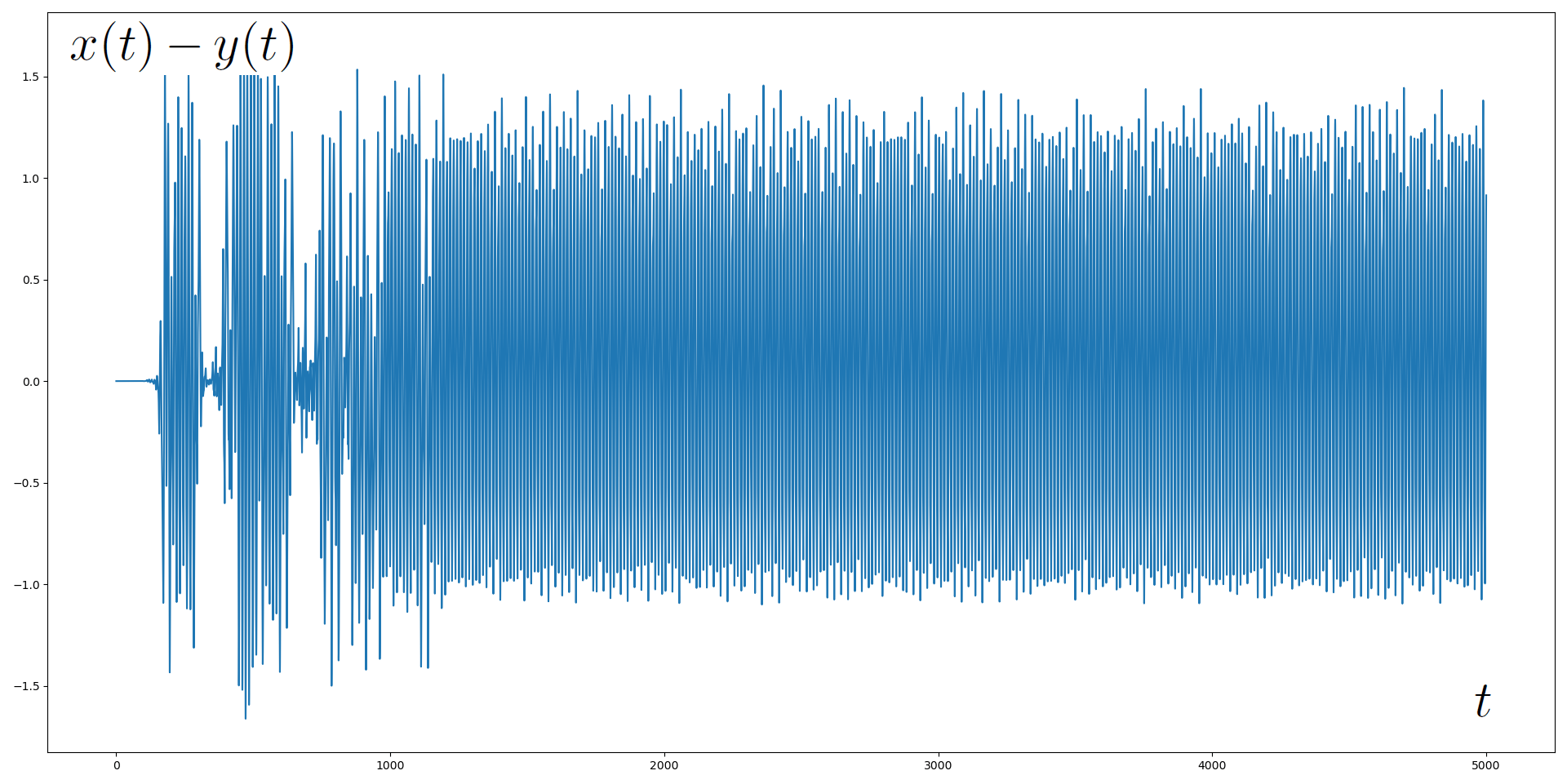}
		\includegraphics[width=18pc,angle=0]{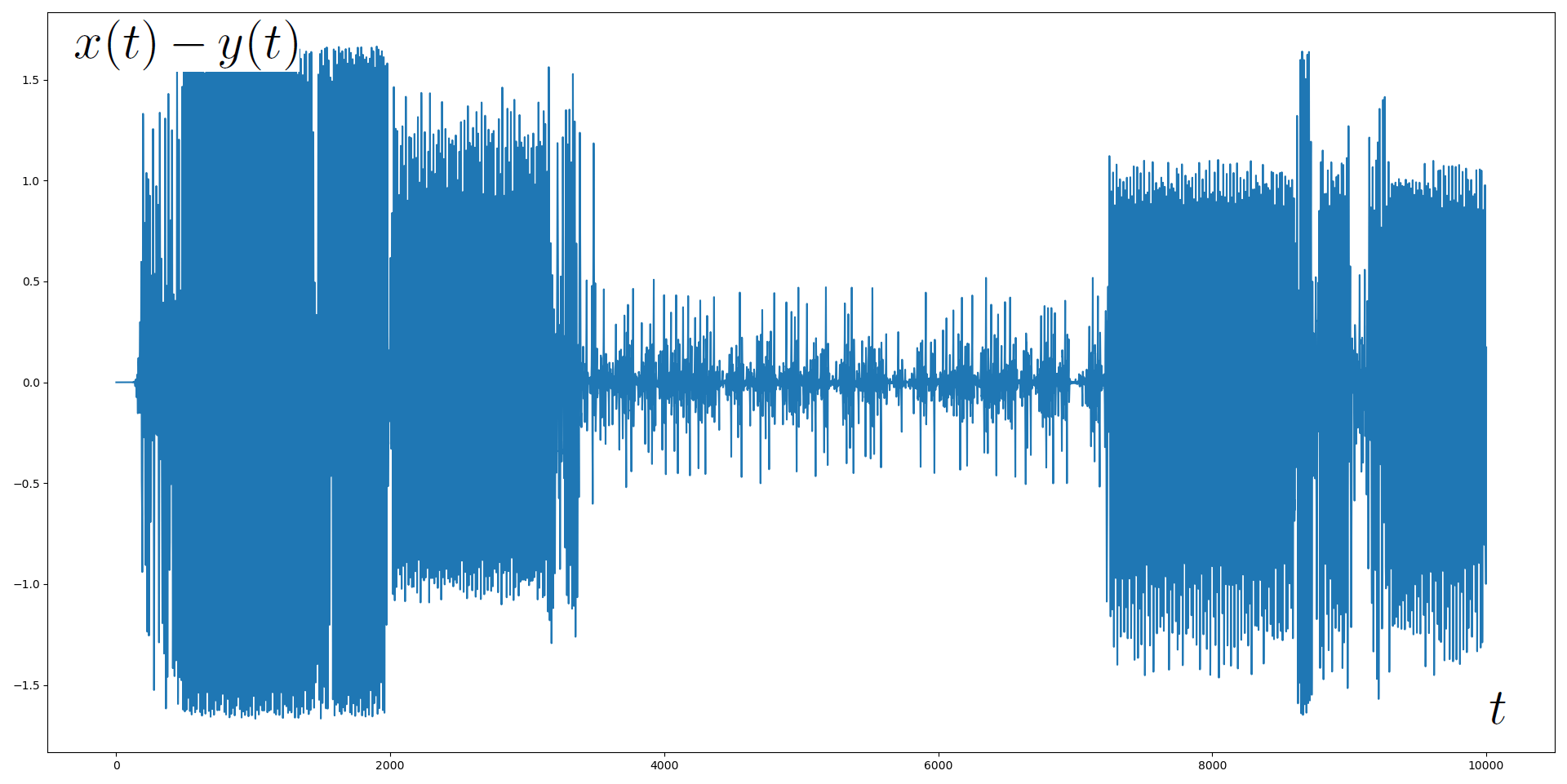}
		\includegraphics[width=18pc,angle=0]{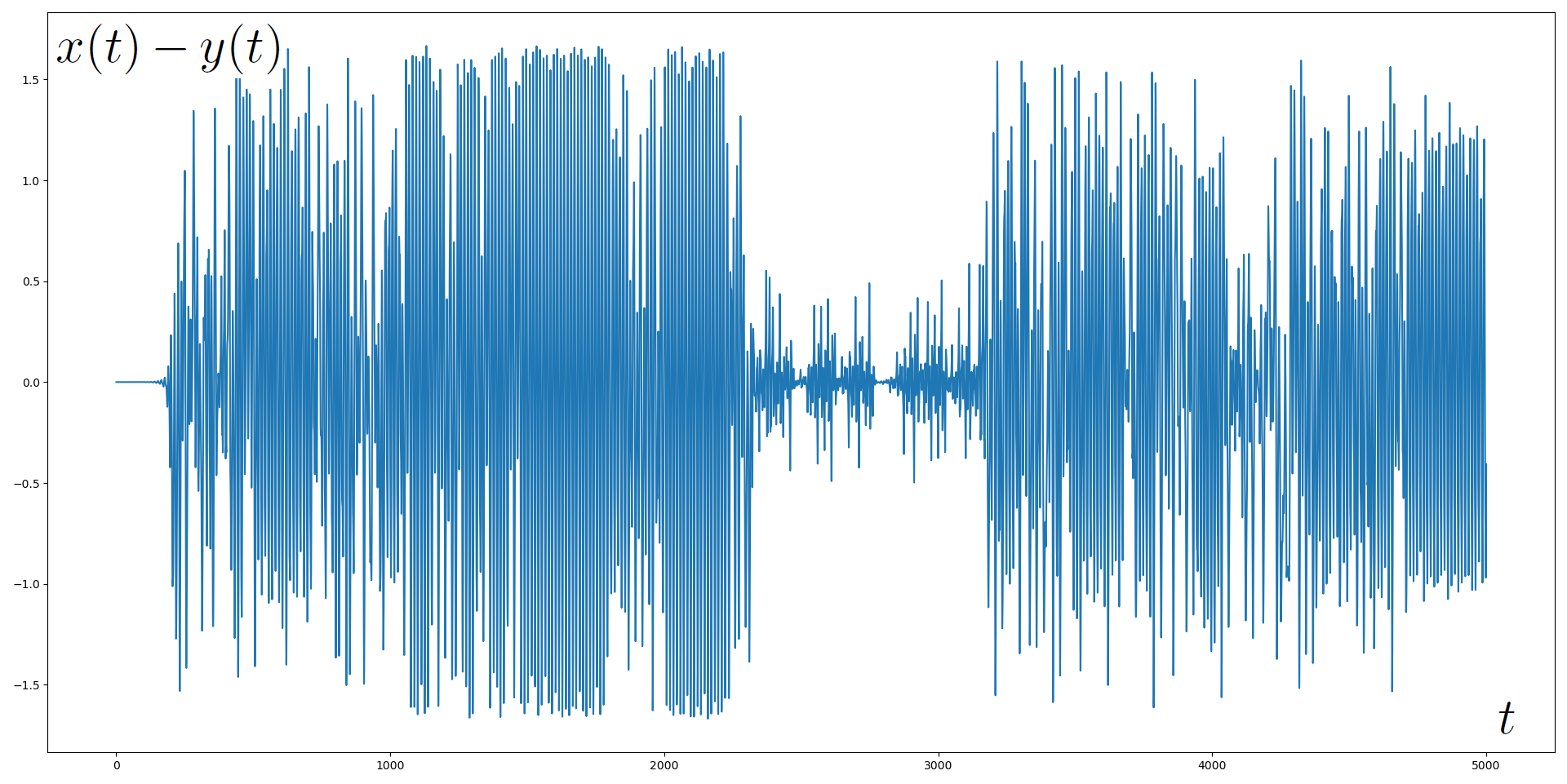}
	\end{minipage}%
	\caption{(Left): Some of the limiting regimes for iterations of the Poincar\'{e} map for the solutions starting from $\phi_{1} \equiv 5$ (red) and $\phi_{2} \equiv 5 + 0.00001$ (blue) of \eqref{EQ: SSmodelForcedAbstract} with $W(t) = A\sin(t)$, $\tau=1.596$, $\alpha=0.75$ and different values of the amplitude $A$. All the iterations are projected onto the $(c_{1},c_{2})$-plane by the projector $\Pi$. (Right): The difference between the corresponding solutions.}
	\label{Fig: AmplitudeChaosandDifferenceCloseQC}
\end{figure}
We fix $\tau = 1.596$ and $\alpha=0.75$. It is likely that the model exhibits a similar as in the work E.~Tziperman et al. \cite{Tzipermanetal1994} route to chaos. In our case it seems to be connected with the presence of a homoclinic ``figure eight'' in a neighborhood of the parameter $(\alpha,\tau)=(0.75,1.596)$ as it was discussed in Section \ref{SEC: SSmodelHidden}. Thus the mechanism may be similar to the one described in S.V.~Gonchenko, C.~Sim\'{o} and A.~Vieiro \cite{Gonchenkoetal2013} or A.~Litvak-Hinenzon and V.~Rom-Kedar \cite{LitvakKedar1997}. It is, however, impossible to justify such routes numerically since they are linked with accurate computations of homoclinics and the presence of number-theoretic phenomena. Thus, we just note that an interesting phenomena can be observed for $A=0.0725$, $A=0.0728$ and $A=0.073$. Fig. \ref{Fig: AmplitudeChaosandDifferenceCloseQC} (left) shows projections onto $(c_{1},c_{2})$-plane as in \eqref{EQ: ProjectorFormulaSSmodel} for certain iterations of the Poincar\'{e} map applied to initial data $\phi_{1} \equiv 5$ (red) and $\phi_{2} \equiv 5+10^{-5}$ (blue) on the time interval $[600 \pi, 100000]$.

We also calculated the first two Lyapunov exponents along the trajectory of $\phi_{1}$ after the transient time $3000 \pi$ using $\operatorname{jitcdde\_lyap}$ procedure from the JiTCDDE package. For the integration we used the same parameters as in Remark \ref{REM: IntegrationSSperturbedChaos} and the linearized system was integrated on the time interval $[0,10000]$. Results are presented in Fig. \ref{FIG: LyapunovExponentsSSPert} for $A \in [0.068, 0.75)$ with the step equal to $0.0001$.
\begin{figure}
	\centering
	\includegraphics[width=1.\linewidth]{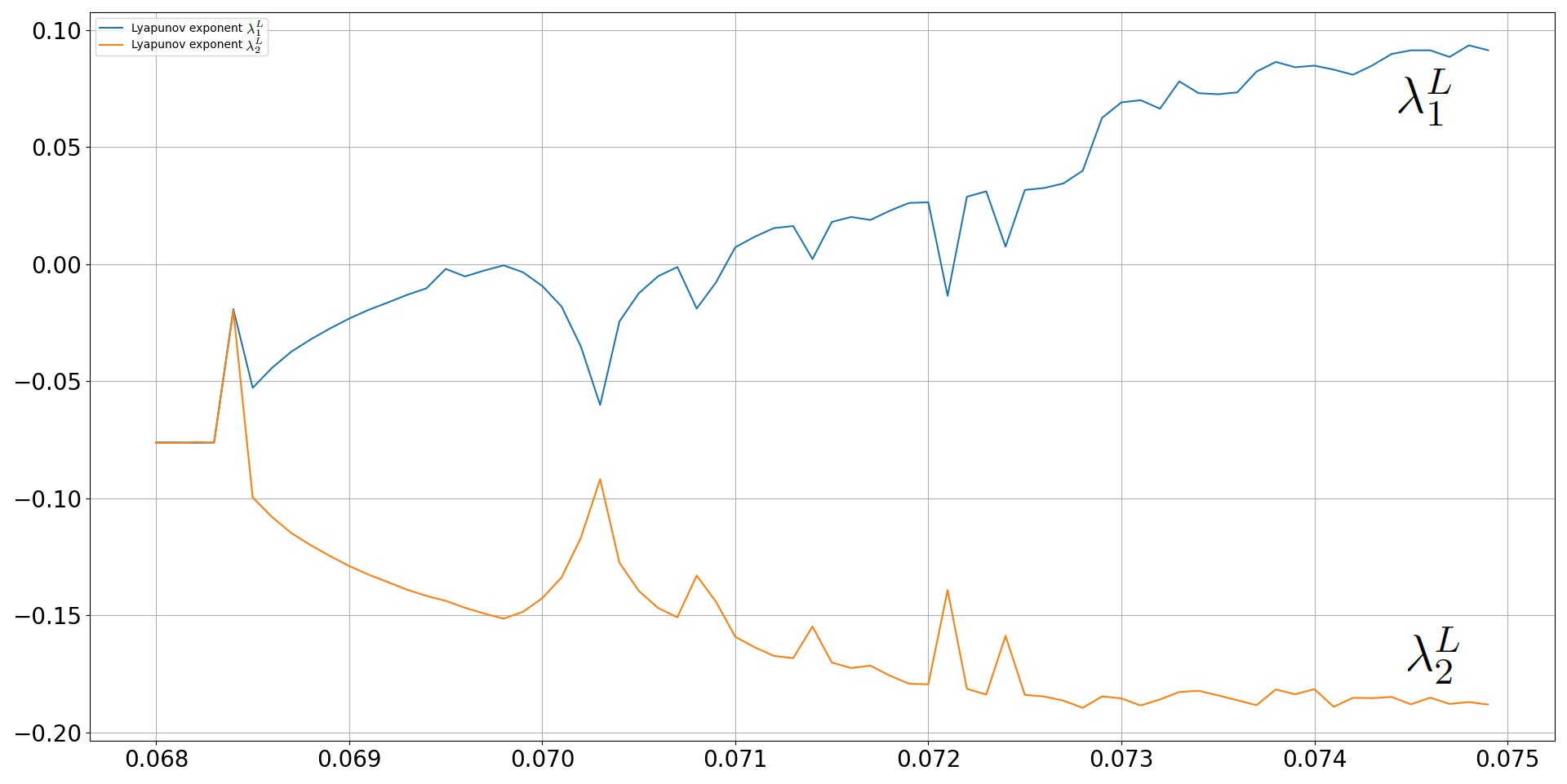}	
	\caption{Numerical approximations for the first two Lyapunov exponents $\lambda^{L}_{1}$ (blue) and $\lambda^{L}_{2}$ (orange) versus the amplitude $A$ (horizontal axis).}
	\label{FIG: LyapunovExponentsSSPert}
\end{figure}
For $A=0.073$ we also calculated the Lyapunov exponents over the trajectory of $\phi_{2}$. This gives the following two values: $\lambda^{L}_{1} = 0.0793 \pm 0.0008$ and $\lambda^{L}_{2} = -0.1834 \pm 0.0007$ and $\lambda^{L}_{1} = 0.057 \pm 0.0009$ and $\lambda^{L}_{2} = -0.1921 \pm 0.0007$ that justify a chaotic nature of the attractor.

From the perspective of climate dynamics, the forcing $W(t)$ should be a seasonal forcing and, consequently, its period should be $1$ year or, in the dimensionless form, $\sigma = 365\tau/\Delta \approx \tau$ (see Remark \ref{REM: SSmodelPeriodFormula}). However, for the considered parameters $(\alpha,\tau) = (0.75,1.596)$ we cannot observe any interesting behavior for forcings with such a period. Thus, the following problem is of interest.
\begin{problem}
	Can the perturbed model \eqref{EQ: SSmodelForcedAbstract} be chaotic for a small $\tau$-periodic forcing.
\end{problem}
Since the model also exhibits quasi-periodic behavior, it is interesting to study the occurrence of strange nonchaotic attractors (see \cite{Anikushin2021AAdyn} for a discussion).
\begin{problem}
	Can the perturbed model \eqref{EQ: SSmodelForcedAbstract} have strange nonchaotic attractors?
\end{problem}
\section{An analytical nonoscillatory region}
\label{SEC: NonOscillatoryConj}

Let $\lambda_{1}=\lambda(\alpha,\tau)>0$ and $\lambda_{2}=\lambda_{2}(\alpha,\tau)<0$ be the positive and negative roots of \eqref{EQ: SSZeroLinearization} respectively. From the dichotomy of linear autonomous systems (see Theorem 4.1, p. 181 in \cite{Hale1977}) it follows that the inequality $\lambda_{1}+\lambda_{2} < 0$ indicates the squeezing of two-dimensional volumes\footnote{To naturally speak about volumes, one should treat the equation in a proper Hilbert space setting. See \cite{Anikushin2020Semigroups} for details.} at the zero equilibrium $\phi^{0}$ of \eqref{EQ: ElNinoSSmodel}. In our work \cite{Anikushin2020Semigroups} we posed the following problem.
\begin{problem}
	\label{PROB: SSmodel1}
	Is it true that there are no periodic orbits and homoclinics in \eqref{EQ: ElNinoSSmodel} provided that $\lambda_{1}+\lambda_{2}<0$?
\end{problem}
The region in the space of parameters $(\tau,\alpha)$ determined by the inequality $\lambda_{1}+\lambda_{2}<0$, which we will denote as $\Omega_{dst}$, is displayed in Fig. \ref{FIG: DEnso2Dimensions}. Note that the boundary of $\Omega_{dst}$ is determined by $\lambda_{1}(\alpha,\tau) + \lambda_{2}(\alpha,\tau) = 0$. It is not hard to show that this is equivalent to
\begin{equation}
	\tau = \frac{\log \frac{1+\sqrt{1-\alpha^{2}}}{\alpha}}{\sqrt{1-\alpha^{2}}}.
\end{equation}
Clearly, $\Omega_{dst}$ is included into the region of linear stability $\Omega_{st}$. As we have shown, for the parameters from $\Omega_{st}$ the presence of self-excited and hidden periodic orbits is possible, but they are observed only above the lower hidden curve. Thus, our analytical-numerical investigations suggest that the answer to Problem \ref{PROB: SSmodel1} should be positive.

Note that a positive answer to Problem \ref{PROB: SSmodel1} implies the convergent (nonoscillatory) behavior in \eqref{EQ: ElNinoSSmodel} for the parameters from $\Omega_{dst}$ due to the Poincar\'{e}-Bendixson trichotomy discussed in Section \ref{SEC: SSmodelHidden}.

\begin{figure}
	\centering
	\includegraphics[width=1.\linewidth]{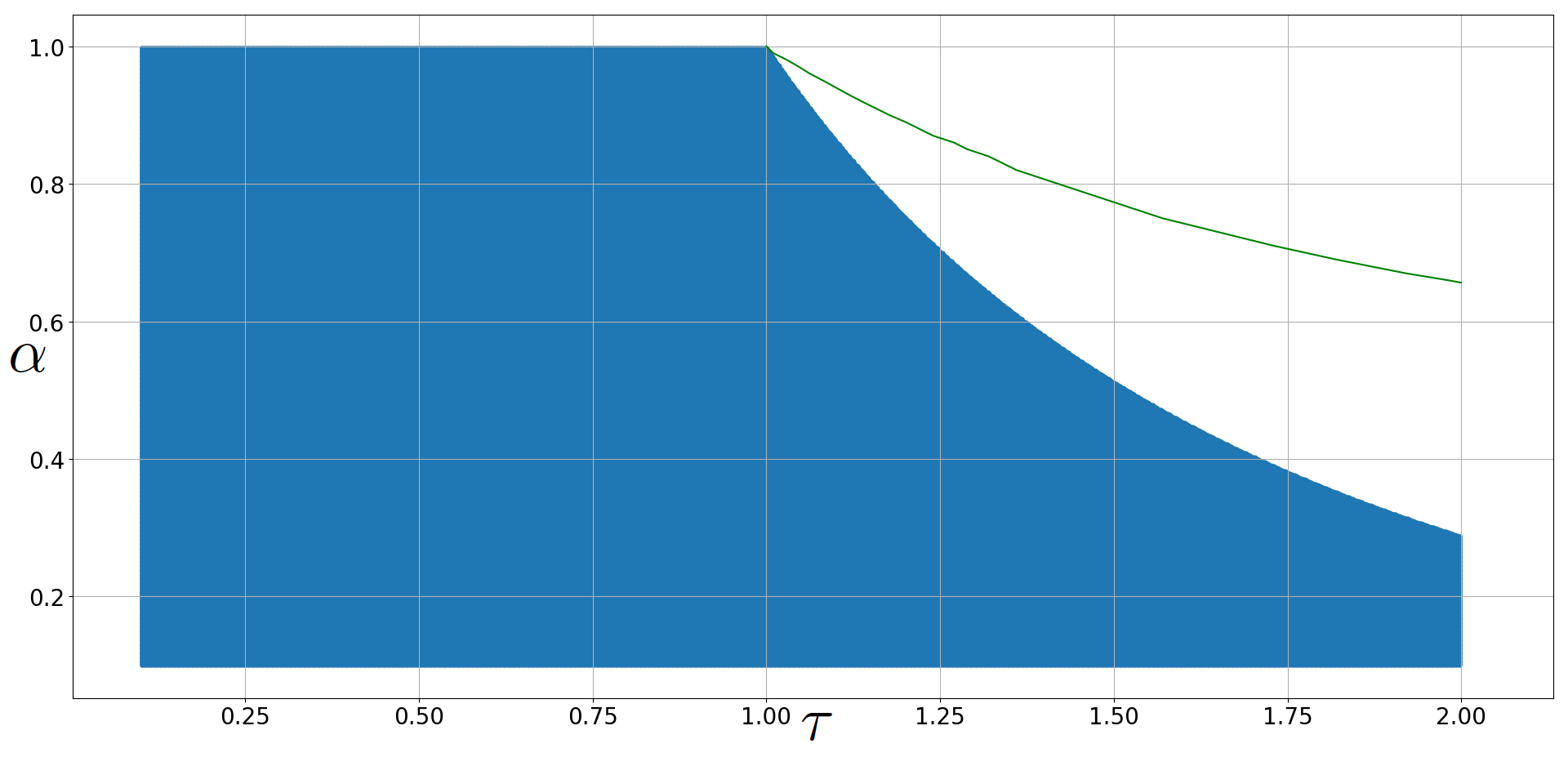}	
	\caption{A numerically obtained region (blue) in the space of parameters $(\tau,\alpha)$, where $0 \leq \tau \leq 2$, of system \eqref{EQ: ElNinoSSmodel}, for which there is a squeezing of two-dimensional volumes at the zero stationary state. The green curve is the lower hidden curve from Fig. \ref{FIG: SSHiddenCurves}.}
	\label{FIG: DEnso2Dimensions}
\end{figure}

Formulation of Problem \ref{PROB: SSmodel1} comes from dimension estimates \cite{Anikushin2020Semigroups}. A more stronger conjecture can be stated in terms of the Lyapunov dimension (see \cite{KuzReit2020}) as
\begin{problem}
	\label{PROB: SSmodel2}
	Is it true that the local Lyapunov dimension at the zero stationary state $\phi^{0}$ coincides with the Lyapunov dimension on the global attractor of \eqref{EQ: ElNinoSSmodel} for the parameters from $\Omega_{dst}$?
\end{problem}
This type of problems in dynamical systems is known as the Eden conjecture. For example, it is proved for the Lorenz system \cite{KuzReit2020}. Note that a positive answer to Problem \ref{PROB: SSmodel2} implies a positive answer to Problem \ref{PROB: SSmodel1} due to the criterion for the absence of invariant curves proved by M.Y.~Li and J.S.~Muldowney \cite{LiMuldowney1995}.

A close problem can be stated for the behavior of $k$-dimensional volumes. It is convenient to speak about it in terms of the so-called $k$-th compound cocycle that is a linear cocycle obtained as an extension to the $k$-th exterior power of the linearization cocycle over the global attractor. It is known that its growth exponent (the right endpoint of the Sacker-Sell spectrum) can be described through a Lyapunov exponent over some ergodic measure. In the case of the Suarez-Schopf model, this restricts us to the study of spectra over equilibria or periodic orbits. In \cite{MalletParetNussbaum2013} J.~Mallet-Paret and R.D.~Nussbaum obtained a nontrivial monotonicity result for scalar linear inhomogeneous equations that allows to compare the monodromy operators of $k$-th compound cocycles over distinct periodic orbits and even (in the case of the Suarez-Schopf model) values of $k$. This can be used to partially answer Problem \ref{PROB: SSmodel1} rigorously. We refer to our work \cite{Anikushin2020Semigroups} for examples and discussions in this direction.
\section{Conclusion}
\label{SEC: Conclusions}
Our investigation of the Suarez-Schopf model shows that the simple linear delayed interaction between the Rossby and Kelvin waves, which is described by the model, may serve as a basis for both theories of irregularity. It also shows that not only engineering systems, but also models from climate dynamics must be studied more carefully when it comes to numerical experiments.

The discovered patterns and relative simplicity of the model open a perspective of its studying via more delicate analytical and numerical techniques. These may include the construction of sharper regions of dissipativity and further development of the spectral theory for linear scalar equations with applications for the construction of inertial manifolds and dimension estimates.

We justified that the influence of a small periodic forcing on the model can cause irregular behavior similar to the chaos studied in the well-known periodically forced oscillators on the plane.

The developed intuition can be extended to study other models with delay such as the oscillatory networks from Appendix \eqref{SEC: AsyncOscillLLTL}, where certain parameters from the Suarez-Schopf model is used to discover hidden and self-excited asynchronous oscillations.
\appendix
\section{Hidden attractors and their localization via the linear feedback gain}
\label{APP: LocalizationHiddenAttrators}
In dynamical systems, a \textit{hidden attractor} is an attracting set, which cannot
be localized by a trajectory starting from a small neighborhood of any
equilibrium. This concept is motivated by the standard approach for numerical studying of nonlinear systems concerned with the local analysis of equilibria. Here trajectories from neighborhoods of unstable equilibria are traced to localize \textit{self-excited attractors} in the system. The notion of a hidden attractor was suggested by G.A.~Leonov and N.V.~Kuznetsov \cite{LeoKuz2013}, who also justified its significance for applied and theoretical problems. Since then, this area has attracted more and more attention. Hidden attractors were discovered in many applied models (besides \cite{LeoKuz2013}, see the reviews of D.~Dudkowski et al. \cite{Dudkowski2016} and N.V.~Kuznetsov \cite{Kuznetsov2020HOReview}), where their presence may lead to a sudden switch to unpredictable behavior and disastrous consequences. This indicates that one should be very careful when analyzing nonlinear systems.

When parameters leading to a hidden oscillation are found, it can be localized by different methods, including the guessing of an initial point from its basin of attraction as a result of a more careful treatment. Sometimes, among the system natural parameters there may exist parameters corresponding to self-excited attractors, which can evolve into the hidden attractor after certain variations of parameters (this is also the case we encounter in Section \ref{SEC: SSmodelHidden}). But tracking the evolution of all possible self-excited attractors in the space of parameters is a practically impossible task and it can be accomplished only in the simplest cases. Thus, the theory of hidden attractors is focused on the development of intuition in the localization of subregions in the space of parameters, where the presence of hidden attractors can be expected and to which more attention should be paid.

Many results on the localization of hidden oscillations are based on the continuation by parameter procedure (another approach is concerned with the so-called perpetual points \cite{Dudkowski2016}). Here we consider a family of parameterized, say by $\varepsilon \in [0,1]$, vector fields such that the corresponding to $\varepsilon=0$ system has an easily localizable (for example, self-excited) periodic orbit. Then we track the evolution of this orbit under changes of $\varepsilon$ with the hope that a nontrivial attractor of the system at $\varepsilon=1$, which corresponds to the original system, will be revealed. 

The most nontrivial part is concerned with the choice of the family of vectors fields. To the best of our knowledge, there is essentially one approach that has repeatedly proven its effectiveness for applied models often leading to nontrivial discoveries. We call it the \textit{linear feedback gain method}. Here, a linear feedback with gain parameters is applied to the system linearized at a given asymptotically stable equilibrium. Outside a neighborhood of the equilibrium, this feedback may be saturated to ensure, for example, that the equilibrium is unique and the system is dissipative. The gain parameters are taken to guarantee the required spectral properties, which make it possible to analytically justify the existence of an easily localizable periodic orbit under some additional assumptions.

The first success (namely, the discovery of a hidden chaotic attractor in the Chua circuit) in the field is concerned with the linear feedback gain method and its justification via the describing function method proposed by G.A.~Leonov and N.V.~Kuznetsov \cite{LeoKuz2013}. To the best of our knowledge, implementations of this method for infinite-dimensional systems (in the context of the theory of hidden oscillations) are still awaiting developments.

For us, it is more preferable the method suggested by I.M.~Burkin \cite{Burkin2014Hidden} in the case of ODEs. It is based on the generalized Poincar\'{e}-Bendixson theory developed by R.A.~Smith \cite{Smith1992}. To apply this theory, the gain parameters are chosen so that the equilibrium become unstable with a two-dimensional unstable manifold. Then the feedback is saturated preserving uniqueness of the equilibrium and providing dissipativity for the system (already at this point one can obtain an interesting region of parameters). The final condition is a frequency-domain condition, which in this case has the form of an inequality containing the transfer function and the gain parameters. If this condition is satisfied, then there is a two-dimensional inertial (slow) manifold and the Poincar\'{e}-Bendixson trichotomy holds. A bit delicate study of the inertial manifold properties allows to show the existence of a periodic orbit, which attracts typical points from a neighborhood of the equilibrium. 

In our work \cite{Anikushin2020Geom} (see also \cite{Anikushin2020Red,Anikushin2020Semigroups}), it is shown that these ideas have a natural geometric generalization that extends the area of applications. In particular, this geometric theory can be applied for delay equations due to the recent progress on the Frequency Theorem \cite{Anikushin2020FreqDelay} and semigroups in Hilbert spaces \cite{Anikushin2020Semigroups} done by one of the present authors. Of course, the experimentalist must be convinced by examples, some of which in the case of ODEs are already given by I.M.~Burkin and N.N.~Khien \cite{Burkin2014Hidden}.

Let us show applications of the linear feedback gain method by means of the Suarez-Schopf model. For this we consider \eqref{EQ: ElNinoSSmodel} in the form
\begin{equation}
	\label{EQ: SSmodelRewrited}
	\dot{x}(t) = (3\alpha - 2)x(t)  - \alpha x(t-\tau) + f(x(t)),
\end{equation}
where $f(y) = -y^{3} + 3( 1-\alpha ) y$. In terms of \eqref{EQ: ExampleDelayEqClass} we have $n=m=r=1$, $\widetilde{A}\phi = (3\alpha - 2) \phi(0) - \alpha \phi(-\tau)$, $\widetilde{B} = 1$, $C\phi = \phi(0)$. Thus, the transfer function of \eqref{EQ: SSmodelRewrited} is given by
\begin{equation}
	\label{EQ: SSmodelLinearStableEqTransfer}
	W(p) = \frac{1}{3\alpha - 2 - \alpha e^{-\tau p} - p}.
\end{equation}
For some numbers $0 < \mu_{\infty} < \mu$ (to be determined) we consider the nonlinearity (feedback gain) $g \colon \mathbb{R} \to \mathbb{R}$ defined as
\begin{equation}
	g(y) = \begin{cases}
		\mu_{\infty} \cdot (y-1) + \mu \text{ for } y > 1,\\
		\mu \cdot y \text{ for } |y| \leq 1, \\
		\mu_{\infty} \cdot (y+1) - \mu \text{ for } y < - 1.
	\end{cases}
\end{equation}
Along with \eqref{EQ: SSmodelRewrited} we consider the family of equations depending on $\varepsilon \in [0,1]$ as
\begin{equation}
	\label{EQ: ElNinoSSmodelFamily}
	\dot{x}(t) = (3\alpha - 2)x(t)  - \alpha x(t-\tau) + F_{\varepsilon}(x(t)),
\end{equation}
where $F_{\varepsilon}(y)=\varepsilon f(y) + (1-\varepsilon) g(y)$. Linearization of \eqref{EQ: ElNinoSSmodelFamily} with $\varepsilon=0$ at the zero leads to the characteristic equation
\begin{equation}
	\label{EQ: SSLinearizedGained}
	3\alpha - 2 + \mu - \alpha e^{-\tau p} - p = 0.
\end{equation}
Assuming that $-4\alpha + 2 < \mu < -2\alpha + 2$, as in Section \ref{SEC: SSmodelHidden} we get the curve (for a fixed $\alpha$)
\begin{equation}
	\label{EQ: SSmodelNetralCurveMu}
	\tau = \frac{\arccos \frac{3\alpha - 2 + \mu}{\alpha} }{\alpha^{2}-(3\alpha - 2 + \mu)^{2}},
\end{equation}
whose points $(\mu,\tau)$ corresponds to the first (as $\tau$ increases) appearance of purely imaginary roots of \eqref{EQ: SSLinearizedGained}. In particular, for $\tau = 1.58$ and $\alpha = 0.75$, putting $\mu=0.45$ (the point $(\mu,\tau)$ is above the curve) and $\mu_{\infty} = 0.005$ (the point $(\mu_{\infty},\tau)$ is below the curve), we get that \eqref{EQ: ElNinoSSmodelFamily} with $\varepsilon=0$ has a unique stationary state with a two-dimensional unstable manifold and the system is dissipative (for the latter see \cite{Smith1992}).

Recall that for $\tau = 1.58$ and $\alpha = 0.75$ all the roots of \eqref{EQ: SSSymmetricLinearization} are given by pairs of complex-conjugate numbers with negative real parts. Moreover, numerical calculations show that the first (as the real part decreases) two pairs of roots can be estimated as $\lambda_{1,2} \approx -0.05 \pm i 0.75$ and $\lambda_{3,4} = -1.2 \pm i 4.78$. Thus, for $\nu_{0} := 0.88$ there are exactly two roots located to the right of the line $-\nu_{0} + i \mathbb{R}$. It is clear that the nonlinearity $F_{0}=g$ is Lipschitz with the Lipschitz constant $\Lambda = \mu = 0.45$. It can be verified numerically that for the transfer function from \eqref{EQ: SSmodelLinearStableEqTransfer} we have
\begin{equation}
	\label{EQ: FreqSSmodelLocalization}
	|W(-\nu_{0} + i \omega)| \leq 0.64 < \Lambda^{-1} = 2.\overline{2} \text{ for all } \omega \in \mathbb{R}.
\end{equation}

Now we can state an auxiliary proposition as follows.
\begin{proposition}
	\label{PROP: SSmodelHidden}
	Consider \eqref{EQ: ElNinoSSmodelFamily} with $\varepsilon=0$, $\alpha = 0.75$, $\tau=1.58$, $\mu = 0.45$ and $\mu_{\infty}=0.005$ and let \eqref{EQ: FreqSSmodelLocalization} be satisfied. Then there exists a periodic orbit such that any point from a sufficiently small neighborhood of the zero equilibrium (except the points from its stable manifold) tends to the periodic orbit.
\end{proposition}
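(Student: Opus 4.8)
The plan is to collapse the infinite-dimensional delay dynamics of \eqref{EQ: ElNinoSSmodelFamily} with $\varepsilon=0$ onto a two-dimensional inertial manifold and then run a planar Poincar\'{e}--Bendixson argument on the reduced flow. First I would apply Theorem \ref{TH: ExampleIMdelay} in the autonomous setting (here $W\equiv 0$, so the cocycle over $\mathcal{Q}$ collapses to a semiflow, which I denote $\varphi^{t}$, and the family $\mathfrak{A}(q)$ reduces to a single manifold $\mathfrak{A}$). All hypotheses are in place: the linear part has transfer function \eqref{EQ: SSmodelLinearStableEqTransfer}, whose poles are the roots of \eqref{EQ: SSSymmetricLinearization}, and with $\nu_{0}=0.88$ exactly $j=2$ of them ($\lambda_{1,2}\approx -0.05\pm i0.75$) lie to the right of $-\nu_{0}+i\mathbb{R}$; the nonlinearity $F_{0}=g$ is $C^{1}$ and globally Lipschitz with $\Lambda=\mu=0.45$, so for the Smith-type form $\mathcal{Q}(y,\xi)=\Lambda^{2}|y|^{2}-|\xi|^{2}$ the required sign conditions hold and the frequency inequality is exactly \eqref{EQ: FreqSSmodelLocalization}; dissipativity (guaranteed by $0<\mu_{\infty}<\mu$, cf. Smith) supplies a bounded-in-the-future solution. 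Theorem \ref{TH: ExampleIMdelay} then furnishes a $C^{1}$, normally hyperbolic two-dimensional $\mathfrak{A}$ on which the projector $\Pi$ from \eqref{EQ: ProjectorFormulaSSmodel} is a $C^{1}$-diffeomorphism onto $\mathbb{R}^{2}$; conjugating $\varphi^{t}|_{\mathfrak{A}}$ by $\Pi$ yields a $C^{1}$ planar flow $\overline{\varphi}^{t}$ on $\mathbb{R}^{2}$.

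Next I would analyze $\overline{\varphi}^{t}$. The saturation of $g$ is arranged so that $0$ is the unique equilibrium and the system is dissipative, hence $\overline{\varphi}^{t}$ admits an absorbing disk $D$. By normal hyperbolicity (Remark \ref{REM: NormalHyperbolicity}) the linearization of $\overline{\varphi}^{t}$ at $0$ has eigenvalues equal to the two roots of \eqref{EQ: SSLinearizedGained} lying to the right of $-\nu_{0}+i\mathbb{R}$; since $\mu=0.45$ sits above the curve \eqref{EQ: SSmodelNetralCurveMu}, this leading pair is complex with positive real part, so $0$ is a repelling focus and has no stable manifold inside $\mathfrak{A}$. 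Choosing a small closed disk $U\ni 0$ on whose boundary the reduced vector field points strictly outward, the region $D\setminus\operatorname{int}U$ is positively invariant and contains no equilibria, so the Poincar\'{e}--Bendixson theorem produces a periodic orbit $\Gamma\subset D\setminus\operatorname{int}U$.

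To upgrade existence to attraction by a single orbit, I would use a ray $\ell$ emanating from $0$ transverse to the spiralling near the focus, together with its first-return map $P$, which is an increasing map satisfying $P(\rho)>\rho$ for small $\rho>0$ (repulsion) and bounded above (dissipativity); its smallest fixed point $\rho^{*}$ corresponds to the innermost periodic orbit $\Gamma$, and monotonicity forces $\rho<P(\rho)<\rho^{*}$ on $(0,\rho^{*})$, so every point of the punctured neighborhood of $0$ in $\mathfrak{A}$ converges to $\Gamma$. Finally I would transfer this back to $\mathbb{E}$ via item 4) of Theorem \ref{TH: ExampleIMdelay}: for $\phi_{0}$ near $0$ there is $\phi_{0}^{*}\in\mathfrak{A}$ with $\|\varphi^{t}\phi_{0}-\varphi^{t}\phi_{0}^{*}\|_{\mathbb{E}}\le Me^{-\nu_{0}t}\operatorname{dist}(\phi_{0},\mathfrak{A})$, and $\phi_{0}^{*}=0$ holds precisely on the stable manifold of $0$; off it $\phi_{0}^{*}\neq 0$, so $\varphi^{t}\phi_{0}^{*}\to\Gamma$ on $\mathfrak{A}$ and the exponential tracking yields $\varphi^{t}\phi_{0}\to\Gamma$.

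The main obstacle, I expect, is precisely this passage back to $\mathbb{E}$ combined with the single-orbit attraction claim. One must verify that the reduced linearization at $0$ is genuinely the leading pair of \eqref{EQ: SSLinearizedGained} (so that $0$ is a repelling focus with empty stable set inside $\mathfrak{A}$), that the first-return map is well defined and strictly monotone even though the focus is only $C^{1}$-linearizable, and that the stable manifold of $0$ in $\mathbb{E}$ coincides exactly with $\{\phi_{0}:\phi_{0}^{*}=0\}$, so that the exceptional set in the statement is correctly identified. The remaining estimates (the eigenvalue count, the frequency bound, dissipativity) are either assumed in the statement or already recorded above and play only a routine role.
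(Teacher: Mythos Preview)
Your proposal is correct and follows essentially the same route as the paper: Theorem \ref{TH: ExampleIMdelay} yields the two-dimensional inertial manifold, dissipativity together with uniqueness of the (repelling) equilibrium and Poincar\'{e}--Bendixson give a periodic orbit attracting the punctured neighborhood on the manifold, and exponential tracking (item 4) transfers this to $\mathbb{E}$ with the stable manifold as the exceptional set; your first-return map step simply makes explicit what the paper phrases as tending to a ``common'' periodic orbit. One small slip: the projector you want here is not the one in \eqref{EQ: ProjectorFormulaSSmodel} (that formula uses the real roots of \eqref{EQ: SSZeroLinearization}), but rather the spectral projector onto the two-dimensional real eigenspace associated with the complex pair $\lambda_{1,2}\approx -0.05\pm i0.75$ of \eqref{EQ: SSSymmetricLinearization}, since the linear part in \eqref{EQ: ElNinoSSmodelFamily} is $\widetilde{A}\phi=(3\alpha-2)\phi(0)-\alpha\phi(-\tau)$.
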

\begin{proof}
	Since the frequency inequality \eqref{EQ: FreqSSmodelLocalization} holds, Theorem \ref{TH: ExampleIMdelay} gives the existence of a two-dimensional inertial manifold $\mathfrak{A}$ (homeomorphic to the plane $\mathbb{R}^{2}$), which attracts all trajectories by trajectories lying on the manifold $\mathfrak{A}$. In particular, trajectories satisfy the Poincar\'{e}-Bendixson trichotomy. Note that the semiflow is dissipative due to the choice of the nonlinearity $g$. From this and since the equilibrium is unique, any point (except the equilibrium itself) from its unstable manifold (which lies in $\mathfrak{A}$ by definition; see \cite{Anikushin2020Geom}) must tend to a common periodic trajectory which encloses the equilibrium. Note that any point $\phi_{0}$ from the entire space, which is sufficiently close to the equilibrium, is exponentially attracted by a point $\phi^{*}_{0}$ from the unstable manifold. Moreover, $\phi^{*}_{0}$ coincides with the equilibrium only if $\phi_{0}$ belongs to the stable manifold. This finishes the proof.
\end{proof}
\begin{remark}
	Note that Proposition \ref{PROP: SSmodelHidden} does not guarantee that the periodic orbit will be orbitally stable\footnote{Note that a similar statement in \cite{Burkin2014Hidden} needs a clarification.}. However, there always exists at least one orbitally stable periodic orbit and any orbitally stable periodic orbit is asymptotically orbitally stable provided that it is isolated from other periodic orbits. In applications, we expect the periodic orbit from Proposition \ref{PROP: SSmodelHidden} to be the only periodic orbit and, consequently, to be asymptotically orbitally stable.
\end{remark}

\begin{figure}
	\begin{minipage}{.5\textwidth}
		\includegraphics[width=18pc,angle=0]{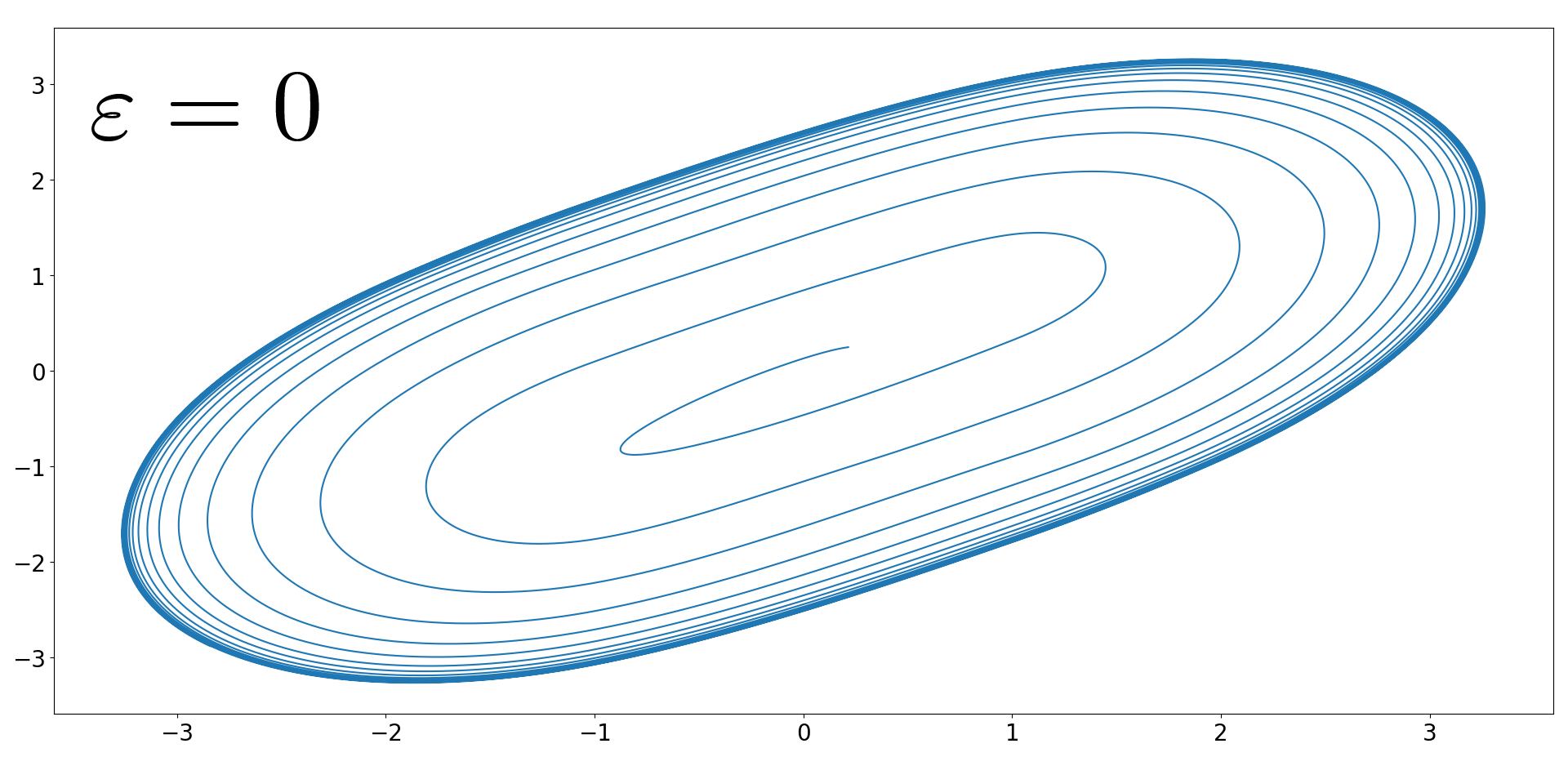}
		\includegraphics[width=18pc,angle=0]{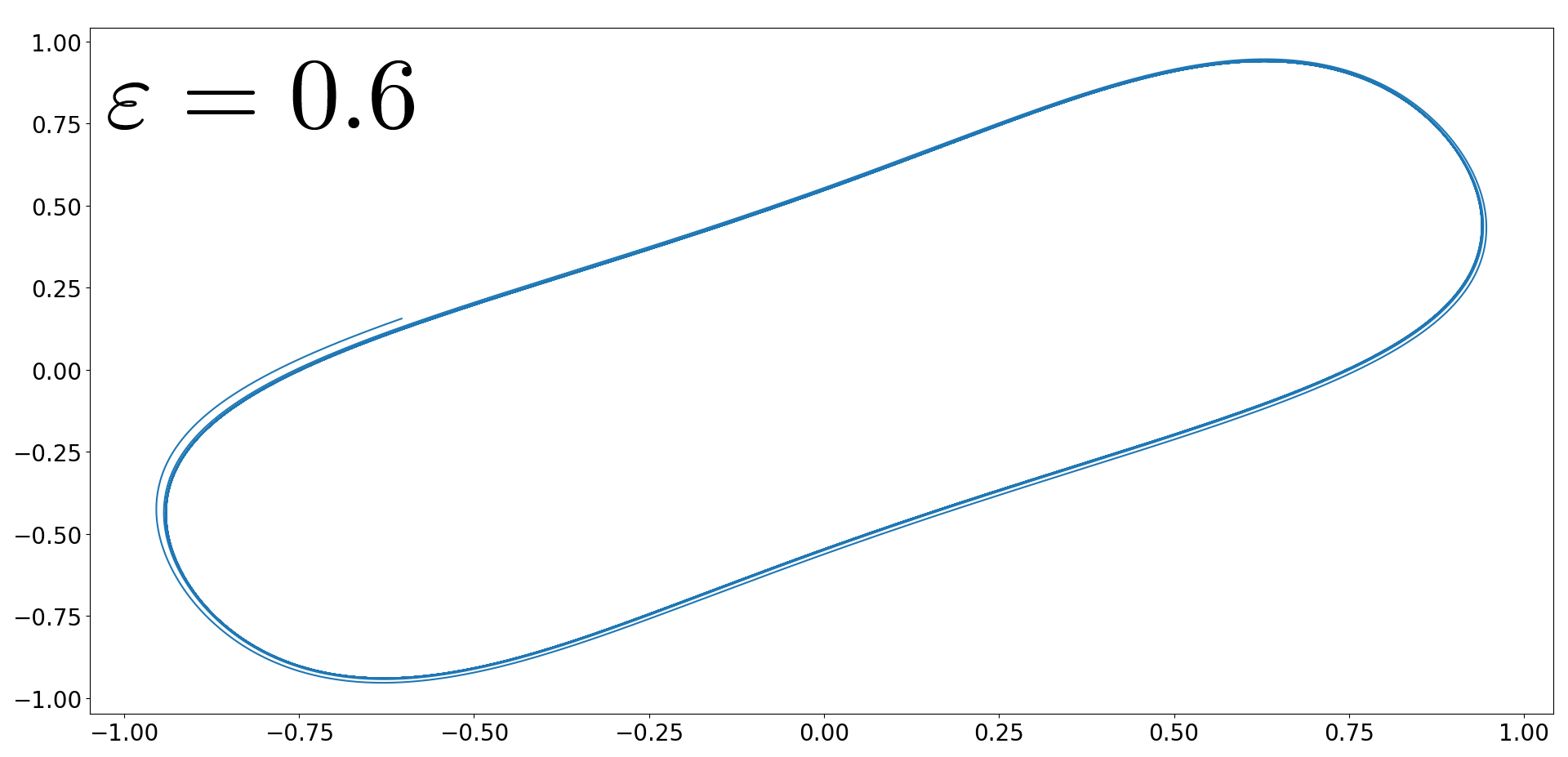}
	\end{minipage}%
	\begin{minipage}{.5\textwidth}
		\includegraphics[width=18pc,angle=0]{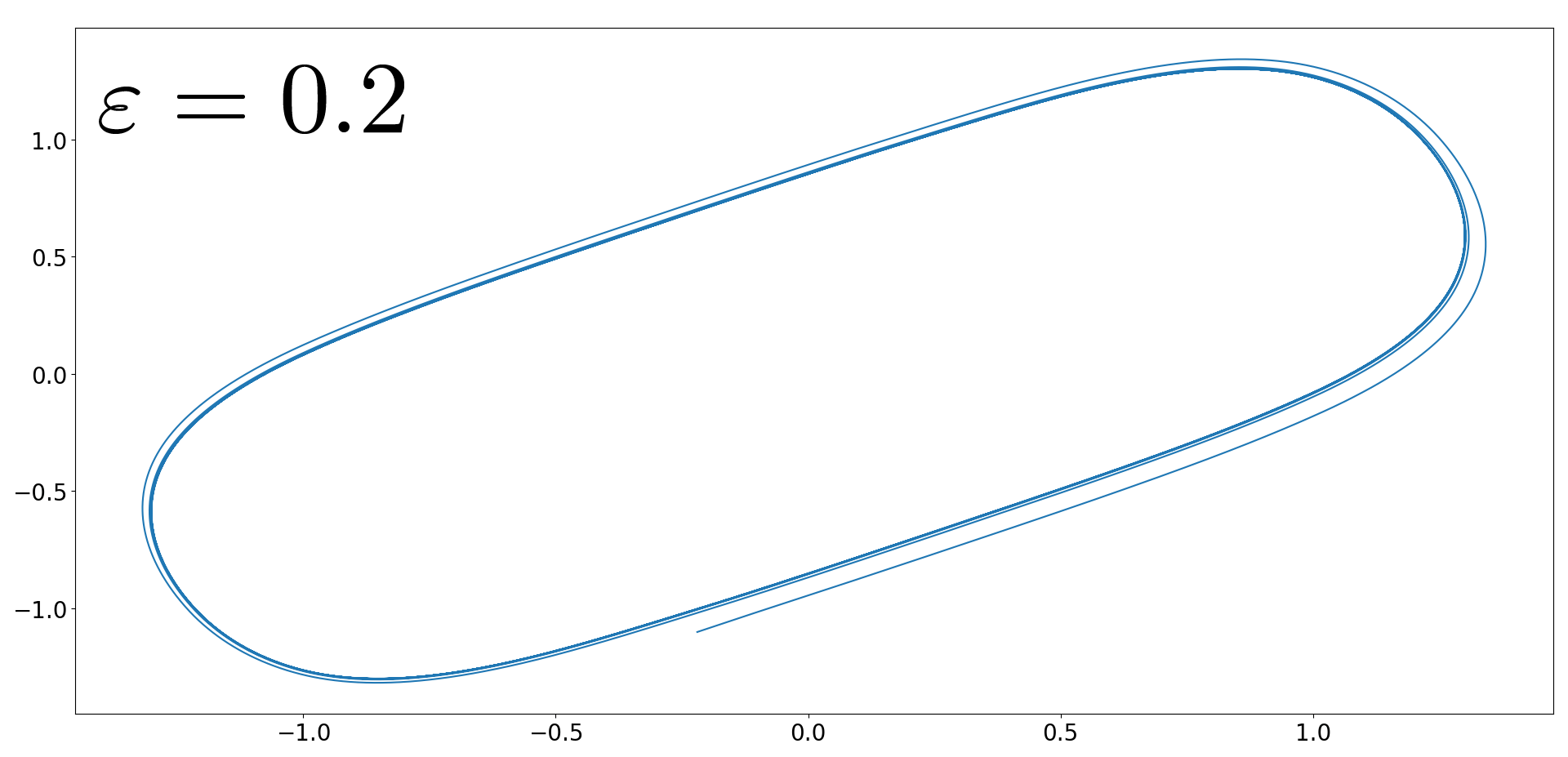}
		\includegraphics[width=18pc,angle=0]{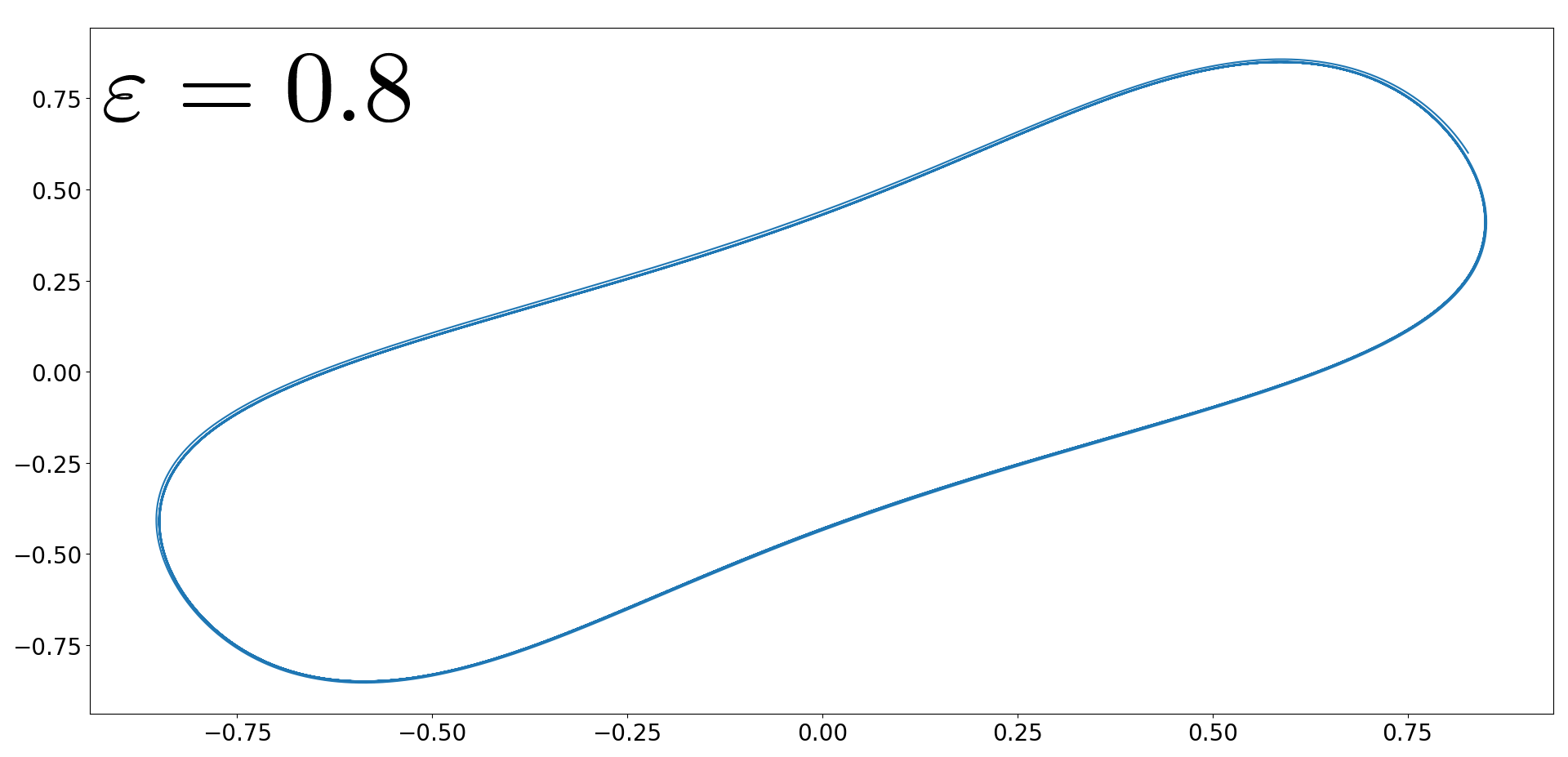}
	\end{minipage}%
	\centering{
		\begin{minipage}{.5\textwidth}			
			\includegraphics[width=18pc,angle=0]{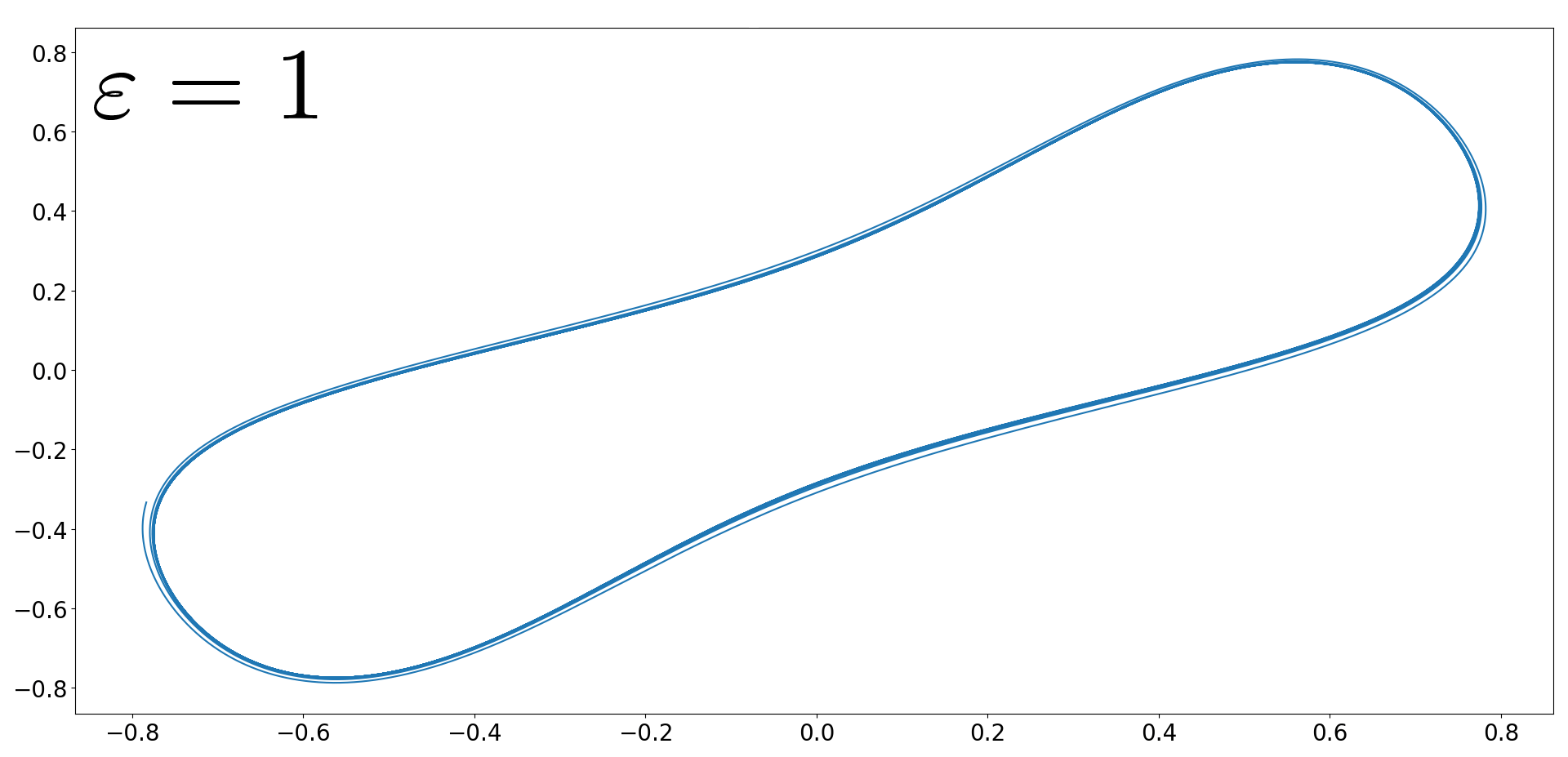}
		\end{minipage}
	}
	\caption{Results of the continuation by parameter procedure. At the initial step $\varepsilon=0$ we see the self-excited periodic orbit, the existence of which is guaranteed by Proposition \ref{PROP: SSmodelHidden}. At $\varepsilon=1$ this orbit evolves into a hidden periodic orbit of \eqref{EQ: ElNinoSSmodel}. All the trajectories are projected onto the $(\phi(-\tau),\phi(0))$ plane.}
	\label{Fig: ContParameterSSHid1}
\end{figure}

Fig. \ref{Fig: ContParameterSSHid1} shows some steps from the continuation by parameter procedure applied to \eqref{EQ: ElNinoSSmodelFamily} as $\varepsilon$ varies from $0$ to $1$. This leads to the discovery of a hidden periodic orbit of \eqref{EQ: ElNinoSSmodel}. Fig. \ref{FIG: SShidden1} justifies that it is indeed a \textit{hidden} periodic orbit, i.~e. it cannot be localized by taking initial data from a sufficiently small neighborhood of any equilibrium.
\section{Asynchronous oscillations in a ring array of coupled lossless transmission lines}
\label{SEC: AsyncOscillLLTL}
Let us consider the model for a ring array of coupled lossless transmission lines studied by J.~Wu and H.~Xia \cite{WuXiaSSRingArray1996}. It is described by a coupled system of $N$ neutral delay equations given by
\begin{equation}
	\label{EQ: RingArrayNDE}
	\begin{split}
		\frac{d}{dt} \left[ D(q) x^{k}_{t} \right] &= -a x^{k}(t) - bq x^{k}(t-\tau)-g(x^{k}(t))+qg(x^{k}(t-\tau)) + \\ &+ dD(q)\left[ x^{k+1}_{t} - 2 x^{k}_{t} + x^{k-1}_{t} \right], \ k=1,\ldots N \pmod N,
	\end{split}
\end{equation}
where $D(q)\phi := \phi(0) - q\phi(-\tau)$ and $q \in (0,1)$, $b$, $d$, $\tau$ are positive parameters. Note that we do not place restrictions on the sign of $a$. In the case $a>0$ there exists an increasing sequence of $q_{k} \in (0,1)$ such that for $q=q_{k}$ the trivial equilibrium of \eqref{EQ: RingArrayNDE} undergoes a Hopf bifurcation (see \cite{WuXiaSSRingArray1996}). The obtained periodic solutions are synchronous, i.~e. $x^{1}(t)=x^{2}(t)=\ldots=x^{N}(t)$ for all $t \in \mathbb{R}$. 

In the monograph \cite{GuoWu2013BifTh} S.~Guo and J.~Wu posed the problem of whether such a coupling can generate stable asynchronous periodic regimes (see p. 149 therein). Here we present certain parameters, although they may have no physical meaning, for which such regimes may occur as hidden or self-excited oscillations.

\begin{remark}
	For numerical integration of \eqref{EQ: RingArrayNDE}, we used the JiTCDDE package for Python (see G.~Ansmann \cite{AnsmannJITCODE2018}). We integrated \eqref{EQ: RingArrayNDE} on the time interval $[0,1500]$ with integration parameters $\operatorname{first\_step}=\operatorname{max\_step} = 10^{-4}$, $\operatorname{atol} = 10^{-5}$, $\operatorname{rtol} = 10^{-5}$.
\end{remark}

We consider \eqref{EQ: RingArrayNDE} with $g(x)=x^{3}$. Note that for $a=-1$, $d=0$ and small $q>0$ the neutral system \eqref{EQ: RingArrayNDE} is almost (up to the small term $q g(x^{k}(t-\tau))$) a system of $N$ uncoupled Suarez-Schopf oscillators \eqref{EQ: ElNinoSSmodel} with $\alpha = b q$. Below we will use parameters from the region $\Omega_{hid}$ (see Fig. \ref{FIG: SSHiddenCurves}) obtained for the Suarez-Schopf model \eqref{EQ: ElNinoSSmodel}. 

For example, let us take $N=2$, $\tau = 2.4$, $d=0.01$, $a=-1$, $b=60$ and $q=0.01$ (the point $(2.4,0.6)$ belong to the region $\Omega_{hid}$). Then there exist 4 asynchronous periodic regimes in the model (see Fig. \ref{fig: RCAAsynchHiddenAndSE} (Left) and Fig \ref{fig: AsyncGraphs}).

\begin{figure}[h!]
	\begin{minipage}{.5\textwidth}
		\includegraphics[width=18pc,angle=0]{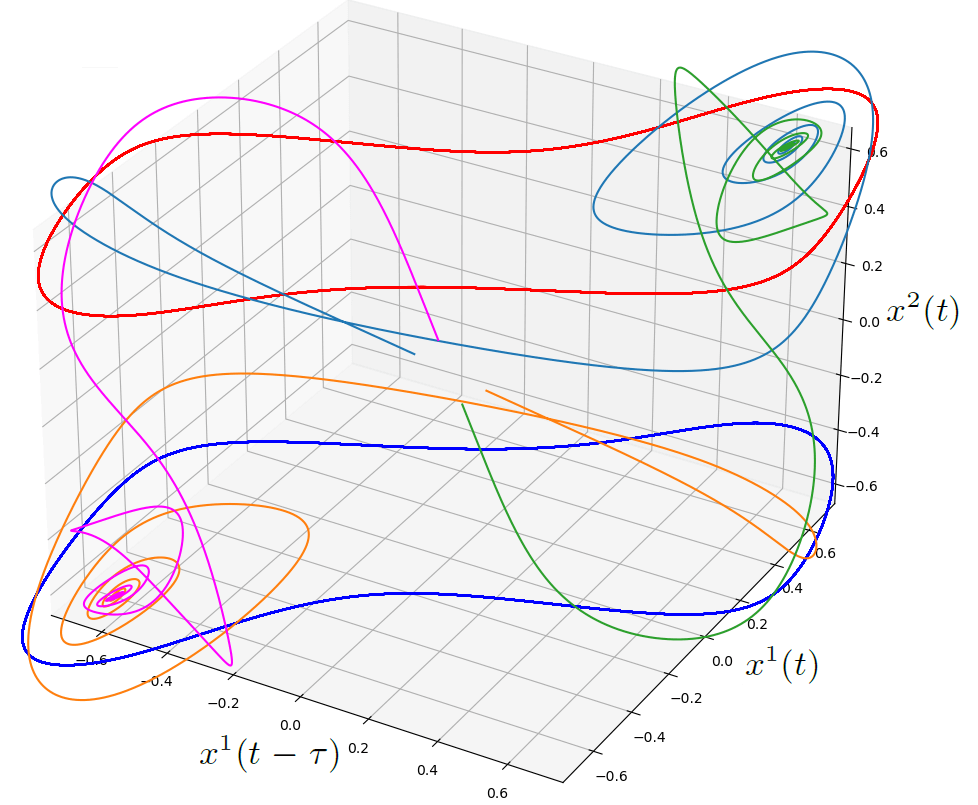}
	\end{minipage}%
	\begin{minipage}{.5\textwidth}
		\includegraphics[width=18pc,angle=0]{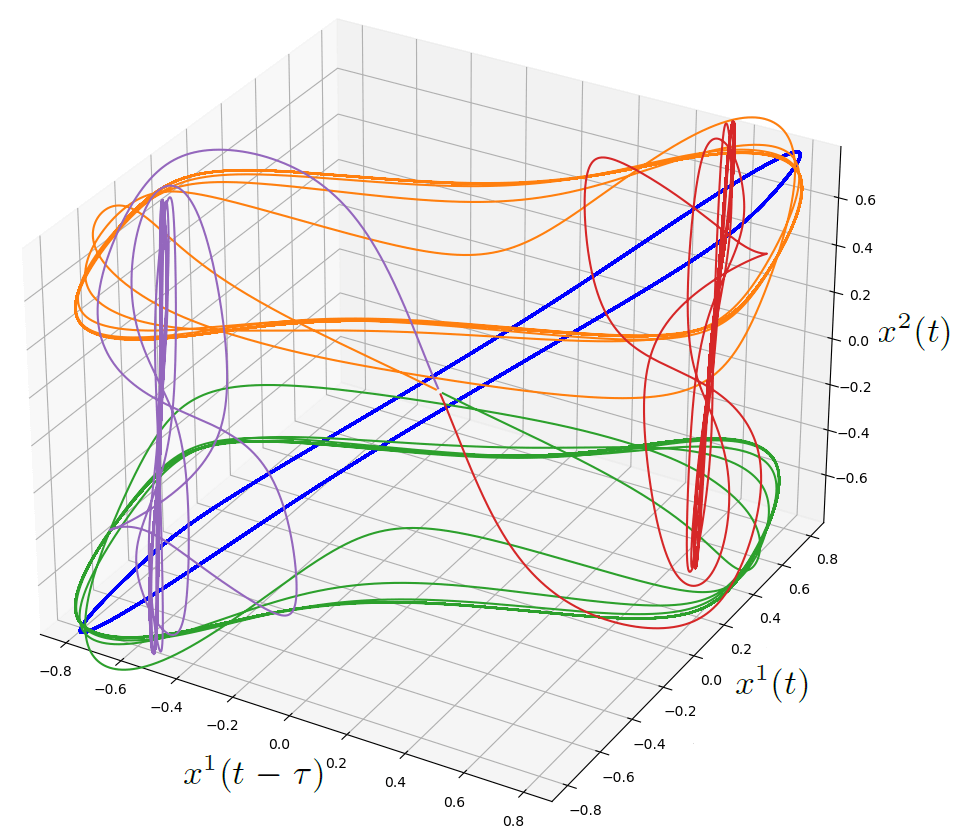}
	\end{minipage}%
	\caption{(Left): Hidden asynchronous periodic orbits (red and blue) of \eqref{EQ: RingArrayNDE} discovered for the parameters $N=2$, $\tau = 2.4$, $a=-1$ $b=60$, $q = 0.01$ and $d=0.01$. Other trajectories start from a neighborhood of the trivial equilibrium. Note that there are also two more hidden orbits, which can be obtained by the symmetry $(x^{1},x^{2}) \mapsto (x^{2},x^{1})$. (Right): A hidden synchronous periodic orbit (blue) of \eqref{EQ: RingArrayNDE} coexists with four self-excited asynchronous periodic orbits localized by trajectories starting from a neighborhood of the trivial equilibrium (red, green, orange and purple). The model parameters are $N=2$,  $\tau = 2.45$, $a=-1$, $b=60$, $q = 0.01$ and $d=0.01$. All the trajectories are projected onto the $(x^{1}(t-\tau),x^{1}(t),x^{2}(t))$-space}
	\label{fig: RCAAsynchHiddenAndSE}
\end{figure}

We use the initial condition $(\phi_{1},\phi_{2})$, where $\phi_{1}(\theta) = 4 \cos(\theta) + 4$ and $\phi_{2}(\theta) = -3e^{\theta} + 3$, to localize the blue periodic orbit from Fig. \ref{fig: RCAAsynchHiddenAndSE} (Left). An intuition for the resulting behavior may be given as follows. For the chosen parameters, one may expect that both components $x^{1}$ and $x^{2}$ will start tending to the hidden periodic orbit of the Suarez-Schopf oscillator (as in Fig. \ref{FIG: SShidden1}) almost independently due to the smallness of the coupling coefficient $d$. However, near the hidden periodic orbit, the coupling may result in a dominant behavior of one component over the other, where the latter is pushed inside the unstable periodic orbit into the basin of attraction of a symmetric stationary state (see Fig. \ref{fig: SSPPunstable}). However, the dominated component cannot reach the equilibrium since it is coupled with the other one which causes a small periodic feedback on it. Thus, the dominated component tends to suffer small periodic phase-locked oscillations. This scenario is shown in Fig. \ref{fig: AsyncGraphs}.

Moreover, for $\tau=2.45$ we observe (see Fig. \ref{fig: RCAAsynchHiddenAndSE} (Right)) a single hidden synchronous periodic orbit (localized by the same as above initial data) and four self-excited asynchronous orbits.

However, the observed dominance of one component is not easy to achieve. One can vary $q$ or $d$ a bit, trying to continue the hidden orbits for the new parameters, and discover that usually both $x^{1}(\cdot)$ and $x^{2}(\cdot)$ will synchronously oscillate. This may indicate that the observed asynchronous behavior is not caused by the smallness of $q$ and $d$, but rather by their relation to other parameters. Thus, it is interesting to investigate continuation of the hidden orbits in the space of all parameters $(\tau,a,b,d,q)$ with the hope of moving towards more physical parameters.
\begin{figure}
	\centering
	\includegraphics[width=1\linewidth]{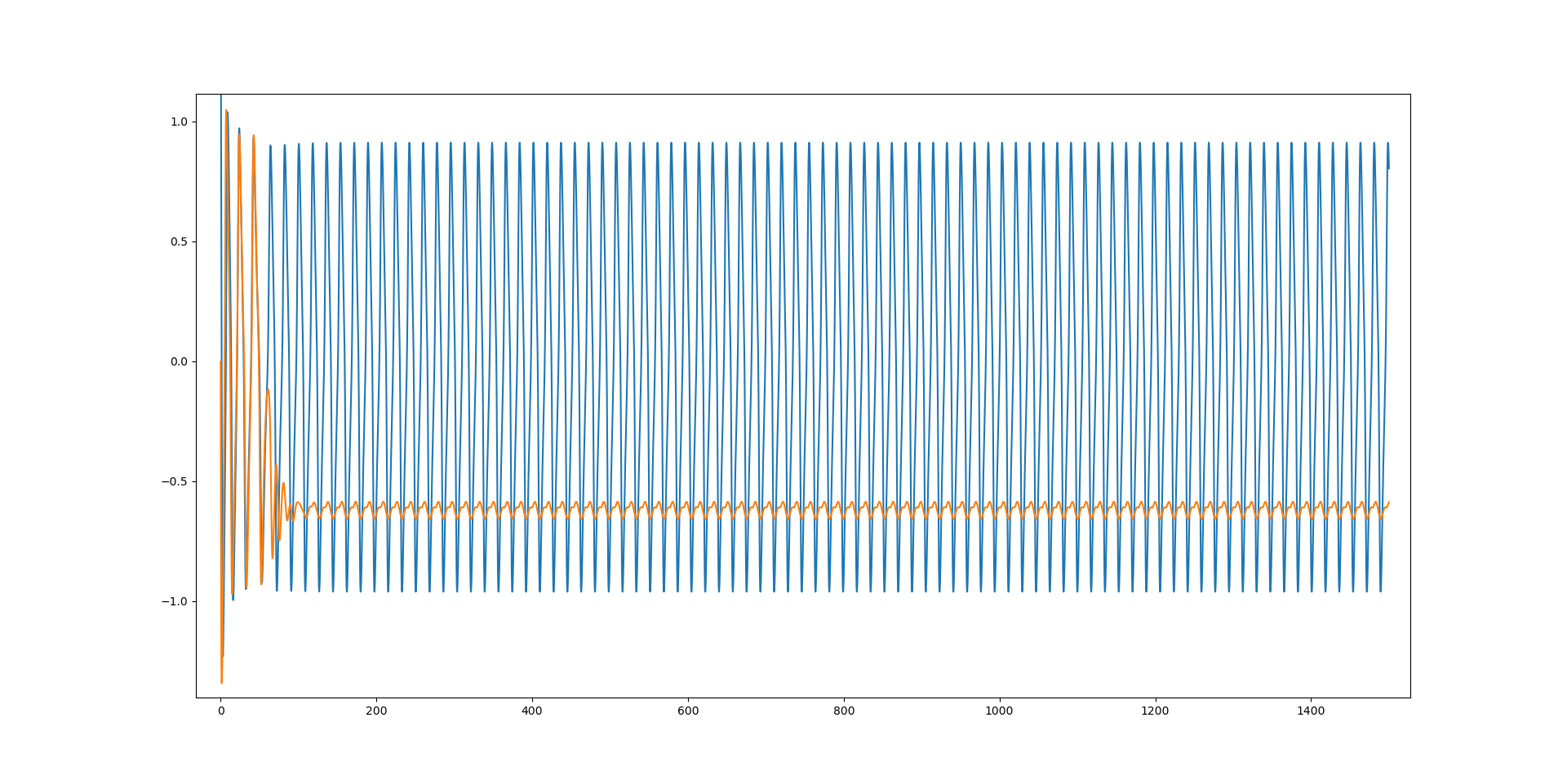}
	\caption{Graphs (versus time, horizontal) of the solution components (vertical) $x^{1}(t)$ (blue) and $x^{2}(t)$ (orange) which tend to the blue hidden periodic orbit from Fig. \ref{fig: RCAAsynchHiddenAndSE} (Left).}
	\label{fig: AsyncGraphs}
\end{figure}

\section*{Acknowledgments}
The authors are grateful to N.V.~Kuznetsov for many fruitful discussions on the topic and also to the anonymous referees of the first (rejected) version of the manuscript submitted to SIAM Journal on Applied Dynamical Systems.

\bibliographystyle{siamplain}
\bibliography{references}

\end{document}